\newcommand{\CC}{\mathbb{C}}
\newcommand{\RR}{\mathbb{R}}
\newcommand{\Schrodinger}{Schr\"odinger }
\newcommand{\Paivarinta}{P\"aiv\"rinta}
\newtheorem{thm}{Theorem}[section]
\newtheorem{prop}[thm]{Proposition}
\newtheorem{lemma}[thm]{Lemma}
\title{Inverse problem of electroseismic conversion. I: Inversion 
       of Maxwell's equations with internal data}
\author{Jie Chen\\ 
Department of Mathematics, Purdue University,\\ 
West Lafayette, IN 40907\\
chenjie@uw.edu
\and 
Maarten de Hoop \\
Department of Mathematics, Purdue University,\\
West Lafayette, IN 40907\\  
mdehoop@purdue.edu}
\date{\today}
\begin{document}

\maketitle

\begin{abstract}
Pride (1994, Phys. Rev. B 50 15678–96) derived the governing model of
electroseismic conversion, in which Maxwell's equations are coupled
with Biot's equations through an electrokinetic mobility
parameter. The inverse problem of electroseismic conversion was first
studied by Chen and Yang (2013, Inverse Problem 29 115006). By
following the construction of Complex Geometrical Optics (CGO)
solutions to a matrix \Schrodinger equation introduced by Ola and
Somersalo (1996, SIAM J. Appl. Math. 56 No. 4 1129-1145), we analyze
the reconstruction of conductivity, permittivity and the
electrokinetic mobility parameter in Maxwell's equations with internal
measurements, while allowing the magnetic permeability $\mu$ to be a
variable function. We show that knowledge of two internal data sets
associated with well-chosen boundary electric sources uniquely
determines these parameters. Moreover, a Lipschitz-type stability is
obtained based on the same set.
\end{abstract}

\section{Introduction}

In fluid-saturated porous media, an electrical double layer (EDL) is
formed at the interface (the pore boundaries) of the fluid and solid
rock. The fluid side of the EDL is charged with positive ions and the
solid side with negative electrons. When electric or magnetic fields
impinge on the EDL, the electrokinetic phenomenon causes movement of
the fluid relative to rock frame and thus emits seismic waves, which
can be remotely detected. This effect is named electroseismic
conversion. Conversely, a seismic wave can cause the separation of
charges and therefore generate electromagnetic fields. At the
intersection of two formation layers, the singularities in electrical,
hydraulic and mechanical properties will lead to a discontinuity in
the induced electromagnetic fields, and emit electro-magnetic waves,
which can be remotely observed. This effect is named
seismoelectric conversion. In fact, these two ways of conversion
always happen simultaneously.

The wave propagation in fluid-saturated porous media was studied by
Biot \cite{Biot1956,Biot1956_2}. In Biot's theory, in addition to the
conventional compressional and shear waves, a compressional slow wave
appears. The first experimental observation of this slow wave was
obtained by Plona \cite{Plona1980}. The predication of the slow wave
has been quantitatively confirmed by Pride \cite{Pride1994}. Based on 
Biot's theory, Pride \cite{Pride1994} also developed
the governing equations of electroseismic conversion, which are
\begin{eqnarray}
	&\nabla\wedge E = i\omega\mu H,&
			\label{Maxwell1}\\
	&\nabla\wedge H = (\sigma - i\epsilon\omega)E
			+ L(-\nabla p +\omega^2\rho_f u) + J_s,&
			\label{Maxwell2}\\
	&-\omega^2(\rho u + \rho_f w) = \nabla\cdot \tau,&
			\label{Biot1}\\
	&-i\omega w = LE + \frac{\kappa}{\eta}(-\nabla p + \omega^2\rho_f u),&
			\label{Biot2}\\
	&\tau = (\lambda\nabla\cdot u + C\nabla\cdot w)I
			+ G(\nabla u + \nabla u^T),&
			\label{Biot3}\\
	&-p = C\nabla\cdot u + M\nabla\cdot w,&
			\label{Biot4}
\end{eqnarray}
where the first two are Maxwell's equations, and the remaining ones
are Biot's equations. The notation is as follows:
\begin{itemize}
	\item[$E$]        electric field,
	\item[$H$]        magnetizing field or magnetic field intensity,
	\item[$\omega$]   seismic wave frequency,
	\item[$\sigma$]   conductivity,
	\item[$\epsilon$] dielectric constant or relative permittivity,
	\item[$\mu$]      magnetic permeability,
	\item[$J_s$]      source current,
	\item[$p$]        pore pressure,
	\item[$\rho_f$]   density of pore fluid,
	\item[$L$]        electro-kinetic mobility parameter,
	\item[$\kappa$]   fluid flow permeability,
	\item[$u$]        solid displacement,
	\item[$w$]        relative fluid displacement, 
	\item[$\tau$]     bulk stress tensor,
	\item[$\eta$]     viscosity of pore fluid,
	\item[$\lambda, G$] Lam$\acute{\mathrm{e}}$ parameters of elasticity,
	\item[$C, M$]     Biot moduli parameters.
\end{itemize}
Some basic properties of the coupling effect were studied by Pride and
Haartsen in \cite{Pride1996}.

Under the assumption that the coupling is weak, depending on the
source, we focus on one way coupling and ignore the second way which
is weaker as it is caused by induced waves. In the case of a seismic
boundary source generating waves, seismoelectric conversion will
dominate and the governing equations are given as above but with the
term $LE$ removed from \eqref{Biot2}. Thompson and Gist
\cite{Thom1993} made the first field measurements in 1993
demonstrating the application of seismoelectric conversion as a survey
tool. Zhu \emph{et al.} \cite{Zhu2003,Zhu2005} performed a series of
laboratory experiments in model wells and studied the application of
seismoelectric conversion as a bore-hole logging tool as well as a
cross-hole logging tool. In 2011, the laboratory experiments by
Schakel \cite{Schakel2011} also indicated the promising application of
seismoelectric conversion for subsurface exploration.

On the other hand, starting with electric field boundary sources,
electroseismic conversion is dominant and seismoelectric conversion is
negligible, in which case the governing model is as above but with the
term $L (-\nabla p + \omega^2 \rho_f u)$ removed in
\eqref{Maxwell2}. In 2005, White \cite{White2005} established a
forward model for the electroseismic method by combining Maxwell's
equations and the elastic wave equation, while the initial amplitude
of elastic waves is calculated according to Pride’s equations with a
high-frequency asymptotic theory. In 2007, Thompson \emph{et al.} at
ExxonMobil \cite{Thom2007} presented results from field tests of
electroseismic conversion in Texas and Canada. By using specially
designed boundary electric current waveforms at dominant frequencies
of 8Hz, 18Hz and 25Hz, their tests over gas sands and carbonate oil
reservoirs succeeded in delineating known hydrocarbon accumulations
from depths up to 1500m, which suggested applicability of
electroseismic conversion at significant depths. The laboratory
experiments by Schakel \cite{Schakel2011} demonstrated that higher
frequencies in electric sources lead to smaller aptitude in induced
seismic
waves. The critical advantage of exploiting electroseismic conversion
is that the experiments provide certain internal data for the inverse
problem for the (time-harmonic) Maxwell's equations and, as we will
show, lead to well-posedness of this problem. In this paper, we
analyze the inverse problem of electroseismic conversion, which was
first studied mathematically by Chen and Yang \cite{Chen2013}, with
internal data given by $\Sigma := L E$. The analysis of recovering
these internal data from boundary measurements using Biot's equations
will be presented in a separate paper.

Let $\Omega$ be an open bounded subset of $\RR^3$ with smooth boundary
$\partial\Omega$. Let $D=-i\nabla$. The time-harmonic Maxwell's
equations are given by
\begin{equation}\label{eq3.1}
    \left\{\begin{array}{l}
        D\wedge H + \omega\gamma E = 0 ,\\
        D\wedge E - \omega\mu H = 0 ,
    \end{array}\right.
\end{equation}
where $\omega > 0$ is a fixed angular frequency and $\gamma =
\epsilon + i\sigma/\omega$. We assume that $\mu, \gamma, \epsilon, 
\sigma \in H^s(\RR^3)$, $s>3/2$, satisfy
\begin{equation}
    \label{eq2.0}
	\mu > 0  \;\mathrm{and}\;  \gamma \neq 0 \;\mathrm{in}\; \RR^3.
\end{equation} 
By substitution, we have the curl-curl form of Maxwell's equations
\begin{equation}
\label{eq2.1} D \wedge \mu^{-1} D \wedge E + \omega^2\gamma E = 0.
\end{equation}
The boundary source is expressed in terms of the boundary tangential
components of the electric field,
\begin{equation}
\label{eq2.2} G:=tE \quad\mathrm{on}\;\partial\Omega,
\end{equation}
where $tE$ is the tangent component of $E$. We also assume that
the internal data of the form
\begin{equation}\notag 
\label{eq2.3}	\Sigma := L E \quad\mathrm{in}\;\Omega
\end{equation}
are given. We define the forward operator $\Lambda_G$ by,
\begin{equation}\notag 
\label{eq2.4} (L,\gamma) \to \Sigma = \Lambda_G(L,\gamma).
\end{equation}
The problem studied in this paper is the inversion of operator
$\Lambda_G$. Precisely, given properly chosen the boundary values of
the electrical field, $G$, can we recover the coupling coefficient $L$
and the complex parameter $\gamma$ from the internal data $\Sigma$?

Inverse problems of Maxwell's equations with other types of internal
data are also studied in \cite{Bal2013,Bal2014}. 
One reconstruction methodology of the inverse problem with internal
data, inspired by Bal and Uhlmann's study of photo-acoustic tomography
(PAT) \cite{Bal2010}, primarily consists of converting the governing
equation to a transport equation and constructing Complex Geometrical
Optics (CGO) solutions to the governing equations. The unique and
stable solvability of the transport equation relies on the estimate of
the vector field in the transport equation associated with the
explicitly constructed CGO solutions. The same methodology was
followed by Chan and Yang \cite{Chen2013}, converting Maxwell's
equations to a transport equation while constructing CGO solutions to
Maxwell equations. They proved the uniqueness and stability of the
reconstruction of the coupling coefficient $L$ and conductivity
$\sigma$ in the case of a constant magnetic permeability $\mu$. Here,
we consider the general case with variable $\mu$ including the
recovery of relative permittivity.

Our construction of CGO solutions for variable $\mu$ employs the idea
of converting Maxwell's equations to a matrix \Schrodinger
equation. The matrix \Schrodinger equation formulation was first
developed by Ola and Somersalo \cite{Ola1993} to study the inverse boundary value
problem in electromagnetics; the relevant analysis was simplified in
\cite{Ola1996}.

This paper is organized as follows: in Section \ref{se:Schrodinger},
we introduce the matrix \Schrodinger equation the CGO solutions of
which are constructed in Section \ref{se:CGO}. Our main theorems are
stated and proven in Section \ref{se:Maxwell}, while we derive the
transport equation and prove the uniqueness result in Section
\ref{se:unique}. We prove the stability results in Sections
\ref{se:stab} and \ref{se:stab2}. In Section \ref{se:time}, we address
the temporal behavior of CGO solutions.

\section{Matrix \Schrodinger equation}\label{se:Schrodinger}

If $\mu\equiv\mu_0$ is constant, the curl-curl form of Maxwell's
equation in \eqref{eq2.1} is simply given by
\begin{equation}
	\label{eq:sch1} D\wedge D\wedge E + \omega^2\gamma\mu_0 E = 0,
\end{equation}
the CGO solutions of which were constructed by Colton
\cite{Colton1992} and extended to high order Sobolev spaces by Chen
and Yang \cite{Chen2013}. However, their construction method fails for
non-constant $\mu$. In the present work, we convert the first-order
system of Maxwell's equations to a matrix \Schrodinger equation and
construct corresponding CGO solutions following
\cite{Ola1996,Salo2009}.

Let $\mu, \epsilon,\sigma\in H^s(\RR^3)$, $s> \frac{3}{2}$, satisfy \eqref{eq2.0}. Taking the divergence of \eqref{eq3.1} gives
\begin{equation}\label{eq3.2}
\left\{\begin{array}{l}
D\cdot(\gamma E) = 0,\\
D\cdot(\mu H) = 0.
\end{array}\right.
\end{equation}
Let $\alpha = \log\gamma$ and $\beta=\log\mu$. By combining \eqref{eq3.1} and \eqref{eq3.2}, we get
\begin{equation}\label{eq3.3}
\left\{\begin{array}{rcl}
	D\cdot E + D\alpha\cdot E &=& 0,\\
	-D\wedge E + \omega\mu H &=& 0,\\
	D\cdot H + D\beta\cdot H &=& 0,\\
	D\wedge H + \omega\gamma E &=& 0.
\end{array}\right.
\end{equation}
The above system contains 8 equations and 6 unknown components in $E$
and $H$. We write \eqref{eq3.3} as a $8\times 8$ matrix system,
\begin{equation}\label{eq3.4}
\left[
\left(\begin{array}{cccc}
	* & 0 & * & D\cdot\\
	* & 0 & * & -D\wedge\\
	* & D\cdot & * & 0\\
	* & D\wedge & * & 0
\end{array}\right)
+
\left(\begin{array}{cccc}
	* & 0 & * & D\alpha\cdot\\
	* & \omega\mu I_3 & * & 0\\
	* & D\beta\cdot & * & 0\\
	* & 0 & * & \omega\gamma I_3
\end{array}\right)
\right]
\left(\begin{array}{c}
0\\H\\0\\E
\end{array}\right)
=0.
\end{equation}
We define
\begin{equation}\notag 
P_+(D) = 
\left(\begin{array}{cc}
	0 & D\cdot\\
	D & D\wedge
\end{array}\right),\quad 
P_-(D) = 
\left(\begin{array}{cc}
	0 & D\cdot\\
	D & -D\wedge
\end{array}\right),
\end{equation}
which satisfy
\begin{eqnarray}
 \notag && P_+(D)P_-(D) =  P_-(D)P_+(D) = -\Delta I_4,\\
 \notag && P_+(D)^* = P_-(D),\quad P_-(D)^* = P_+(D).
\end{eqnarray}
We also define
\begin{equation}\notag 
P^\mp(D) = \left(\begin{array}{cc}
	0 & P_-(D)\\
	P_+(D) & 0
\end{array}\right).
\end{equation}
Then $P^\mp(D)P^\mp(D) = -\Delta I_8$ and $P^\mp(D)^*=P^\mp(D)$. We denote
\begin{equation}\notag 
P^\mp(a,b) = \left(\begin{array}{cc}
	0 & P_-(b)\\
	P_+(a) & 0
\end{array}\right),\quad
P^\pm(a,b) = \left(\begin{array}{cc}
	0 & P_+(b)\\
	P_-(a) & 0
\end{array}\right),
\end{equation}
where $a,b\in\CC^3$, and denote
\begin{equation}\notag 
\mathrm{diag}(A,B) = \left(\begin{array}{cc}
	A & 0\\
	0 & B
\end{array}\right).
\end{equation}
Here $A,B$ are $4\times 4$ diagonal complex-valued matrices.

We now introduce scalar fields $\Phi$ and $\Psi$, and $X = (\Phi,\; H,\; \Psi,\; E)^t$, so that the matrix system in \eqref{eq3.4} attains the form
\begin{equation}\notag 
	(P^\mp(D) + V_{\mu,\gamma})X = 0 \quad \mathrm{in}\;\Omega,
\end{equation}
where
\begin{equation}\notag 
V_{\mu,\gamma} = \left(\begin{array}{cccc}
	\omega\mu & 0 & 0 & D\alpha\cdot \\
	0 & \omega\mu I_3 & D\alpha & 0 \\
	0 & D\beta\cdot & \omega\gamma & 0\\
	D\beta & 0 & 0 & \omega\gamma I_3
\end{array}\right).
\end{equation}
Solutions to this system with $\Phi=\Psi=0$ correspond to solutions of the original Maxwell system. By changing variables so that
\begin{equation}\label{eq3.8}
	Y = \mathrm{diag}(\mu^{1/2},\gamma^{1/2})X,
\end{equation}
we have
\begin{equation}
\label{eq3.5}
	(P^\mp(D) + V_{\mu,\gamma})X=0\quad \Leftrightarrow\quad (P^\mp(D) + W_{\mu,\gamma})Y=0,
\end{equation}
where $\kappa=\omega(\gamma\mu)^{1/2}$ and 
\begin{equation}\notag 
W_{\mu,\gamma} = \left(\begin{array}{cc}
	\kappa I_4 & \frac{1}{2}P_+(D\alpha)\;\\
	\frac{1}{2}P_-(D\beta) \; & \kappa I_4
\end{array}\right)
=\kappa I_8 + \frac{1}{2}P^\pm(D\beta,D\alpha).
\end{equation}
A direct calculation leads to the following

\begin{lemma}[\cite{Salo2009}]\label{thm:operator_PP}
One has
\begin{eqnarray}
	\notag && (P^\mp(D) + W_{\mu,\gamma})(P^\mp(D) - W_{\mu,\gamma}^t) = -\Delta I_8 + \tilde{Q}_{\mu,\gamma},\\
	\notag && (P^\mp(D) - W_{\mu,\gamma}^t)(P^\mp(D) + W_{\mu,\gamma}) = -\Delta I_8 + \tilde{Q}_{\mu,\gamma}'.
\end{eqnarray}
Here, the matrix potentials, $\tilde{Q}_{\mu,\gamma}$ and
$\tilde{Q}'_{\mu,\gamma}$, are given by
\begin{eqnarray}
	\notag \tilde{Q}_{\mu,\gamma} &=& \frac{1}{2}
		\left(\begin{array}{c|c}
			\begin{array}{cc}
				\Delta\alpha & 0 \\
				0 & 2\nabla\nabla\alpha-\Delta\alpha I_3
			\end{array} & 0\\
			\hline
			0 & \begin{array}{cc}
					\Delta\beta & 0 \\
					0 & 2\nabla\nabla\beta-\Delta\beta I_3
				\end{array}
	\end{array}\right)\\
	\notag && - \left(\begin{array}{c|c}
		(\kappa^2 + \frac{1}{4}(D\alpha)^2)I_4 & 
			\begin{array}{cc}
				0 & D\kappa \\
				2D\kappa & 0
			\end{array} \\
		\hline
		\begin{array}{cc}
				0 & 0 \\
				0 & -2D\kappa\wedge
		\end{array} & (\kappa^2 + \frac{1}{4}(D\beta)^2)I_4
	\end{array}\right),
\end{eqnarray}
and 
\begin{eqnarray}
	\notag \tilde{Q}_{\mu,\gamma} &=& \frac{1}{2}
		\left(\begin{array}{c|c}
			\begin{array}{cc}
				\Delta\beta & 0 \\
				0 & 2\nabla\nabla\beta-\Delta\beta I_3
			\end{array} & 0\\
			\hline
			0 & \begin{array}{cc}
					\Delta\alpha & 0 \\
					0 & 2\nabla\nabla\alpha-\Delta\alpha I_3
				\end{array}
	\end{array}\right)\\
	\notag && - \left(\begin{array}{c|c}
		(\kappa^2 + \frac{1}{4}(D\beta)^2)I_4 & 
			\begin{array}{cc}
				0 & D\kappa \\
				2D\kappa & 0
			\end{array} \\
		\hline
		\begin{array}{cc}
				0 & 0 \\
				0 & -2D\kappa\wedge
		\end{array} & (\kappa^2 + \frac{1}{4}(D\alpha)^2)I_4
	\end{array}\right),
\end{eqnarray}
with $\nabla\nabla f = (\partial^2_{x_j,x_k}f)^3_{j,k=1}$.
\end{lemma}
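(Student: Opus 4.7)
The plan is to expand each product via the identity $P^\mp(D)^2=-\Delta I_8$, isolate the purely algebraic piece $W_{\mu,\gamma}W_{\mu,\gamma}^t$, and compute the commutator $W_{\mu,\gamma}P^\mp(D)-P^\mp(D)W_{\mu,\gamma}^t$ using Leibniz and a few vector-calculus identities. Writing $P=P^\mp(D)$ and $W=W_{\mu,\gamma}$, and using $P_\pm(a)^t=P_\mp(a)$, one has $W=\kappa I_8+\frac{1}{2}P^\pm(D\beta,D\alpha)$ with transpose $W^t=\kappa I_8+\frac{1}{2}P^\pm(D\alpha,D\beta)$. Multiplying out yields
\[ (P+W)(P-W^t)=-\Delta I_8+(WP-PW^t)-WW^t, \]
and analogously for $(P-W^t)(P+W)$, so it suffices to identify $(WP-PW^t)-WW^t$ with $\tilde Q_{\mu,\gamma}$ (and the reversed-order analog with $\tilde Q'_{\mu,\gamma}$).

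For the algebraic piece, $WW^t$ splits into the diagonal $\kappa^2 I_8$, the block-diagonal quartic $\frac{1}{4}P^\pm(D\beta,D\alpha)P^\pm(D\alpha,D\beta)$ which collapses via $P_\pm(a)P_\mp(a)=(a\cdot a)I_4$ into $\frac{1}{4}(D\alpha)^2 I_4$ and $\frac{1}{4}(D\beta)^2 I_4$ on the two diagonal blocks, and two $\kappa$-linear off-diagonal cross terms that will be combined with the commutator $[\kappa,P]$ in the next step. These contributions account for the $-(\kappa^2+\frac{1}{4}(D\alpha)^2)I_4$ and $-(\kappa^2+\frac{1}{4}(D\beta)^2)I_4$ diagonal blocks appearing in $\tilde Q_{\mu,\gamma}$.

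For the commutator, decompose $WP-PW^t=[\kappa,P]+\frac{1}{2}\bigl(P^\pm(D\beta,D\alpha)P-PP^\pm(D\alpha,D\beta)\bigr)$. The Leibniz identity $P_\pm(D)(fu)=P_\pm(Df)u+fP_\pm(D)u$ gives $[\kappa,P]=-P^\mp(D\kappa)$; combining this with the off-diagonal $\kappa$-cross terms of $WW^t$ (using $D\kappa=\frac{\kappa}{2}(D\alpha+D\beta)$, coming from $\kappa=\omega(\gamma\mu)^{1/2}$) produces the asymmetric $D\kappa$ off-diagonal blocks $\begin{pmatrix} 0 & D\kappa \\ 2D\kappa & 0\end{pmatrix}$ and $\begin{pmatrix} 0 & 0 \\ 0 & -2D\kappa\wedge\end{pmatrix}$ in $\tilde Q_{\mu,\gamma}$, the asymmetry reflecting that $P^\mp(D)$ places $P_-(D)$ on the top block and $P_+(D)$ on the bottom. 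The second summand is block-diagonal, with top-left $\frac{1}{2}\bigl(P_+(D\alpha)P_+(D)-P_-(D)P_-(D\alpha)\bigr)$ and bottom-right the analogous $\beta$-expression. Each is evaluated in the scalar/vector decomposition of $\CC^4$ using Leibniz together with $D\cdot D\alpha=-\Delta\alpha$, $D\wedge D\alpha=0$, $D\cdot(a\wedge b)=(D\wedge a)\cdot b-a\cdot(D\wedge b)$, and $D\wedge(a\wedge b)=a(D\cdot b)-b(D\cdot a)+(b\cdot D)a-(a\cdot D)b$. The terms that differentiate $u$ cancel between the two summands, and the surviving multiplication operators are exactly $\frac{1}{2}\Delta\alpha$ (scalar entry) and $\frac{1}{2}(2\nabla\nabla\alpha-\Delta\alpha I_3)$ (vector entry) in the top-left, with the analogous $\beta$-entries in the bottom-right, matching the block-diagonal of $\tilde Q_{\mu,\gamma}$. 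The second identity follows by running the entire calculation with the factors reversed, which swaps $\alpha$ and $\beta$ on the block-diagonal while leaving the $D\kappa$-off-diagonals unchanged, thereby producing $\tilde Q'_{\mu,\gamma}$.

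The main obstacle is the vector-calculus bookkeeping in the commutator step: correctly evaluating the two block-diagonal commutators, identifying the $(a\cdot D)b$-type contributions as the Hessian entries $\nabla\nabla\alpha$ and $\nabla\nabla\beta$, and verifying that the formally first-order-in-$u$ pieces actually cancel so that the full commutator $WP-PW^t$ reduces to a multiplication operator. Tracking the signs in the cancellation between $[\kappa,P]$ and the off-diagonal cross terms of $WW^t$ that produces the asymmetric $D\kappa$ blocks requires equal care.
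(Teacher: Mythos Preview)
Your approach is correct and coincides with the paper's: the paper does not supply its own argument but simply states that ``a direct calculation leads to the following'' and cites the result from \cite{Salo2009}. Your expansion $(P+W)(P-W^t)=-\Delta I_8+(WP-PW^t)-WW^t$, together with the block-by-block evaluation via $P_\pm(a)P_\mp(a)=(a\cdot a)I_4$, the Leibniz rule $P_\pm(D)(fu)=P_\pm(Df)u+fP_\pm(D)u$, and the relation $D\kappa=\tfrac{\kappa}{2}(D\alpha+D\beta)$, is precisely that direct calculation.
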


We can extend $\mu$ and $\gamma$ to $\RR^3$ so
that for some nonzero constants $\mu_0$ and $\gamma_0$,
$\mu - \mu_0$ and $\gamma - \gamma_0$ are compactly supported. We also
assume that $Z$ is the solution to
\begin{equation}
\label{eq3.6}
	(P^\mp(D) - W^t_{\mu,\gamma})Z = Y.
\end{equation}
Lemma \ref{thm:operator_PP} then implies that $Z$ solves the matrix \Schrodinger equation
\begin{equation}\label{eq3.7}
	(-(\Delta + k^2)I_8 + Q_{\mu,\gamma})Z=0,
\end{equation}
where $k=\omega(\mu_0\gamma_0)^{1/2}$ and $Q_{\mu,\gamma} = k^2 I_8 +
\tilde{Q}_{\mu,\gamma}$. It follows that $Q_{\mu,\gamma}$ is
compactly supported.

In the following section, we will construct solutions to
\eqref{eq3.7} and thus solutions to Maxwell's equations according to
\eqref{eq3.8} and \eqref{eq3.6}.

\section{Complex Geometrical Optics(CGO) solutions}\label{se:CGO}

Sylvester and Uhlmann \cite{Uhlmann1987} constructed CGO solutions to
the scalar \Schrodinger equation. Ola and Somersalo \cite{Ola1996}
followed the Sylvester-Uhlmann method to construct CGO solutions to
the matrix \Schrodinger equation given in \eqref{eq3.7}. The CGO
solutions by Ola and Somersalo are in a weighted $L^2$ space. In the
present work, we apply the Sylvester-Uhlmann method to construct CGO
solutions in higher order Sobolev spaces.

We first introduce some notation. Let the space $L^2_\delta$ for
$\delta\in\RR$ be the completion of $C^\infty_0(\RR^3)$ with respect
to the norm $\|\cdot\|_{L^2_{\delta}}$ defined by
\begin{equation}
	\label{eq3.9}\notag 
	\|u\|_{L^2_\delta} = \left(\int_{\RR^3} \langle x\rangle^{2\delta}|u|^2dx\right)^{1/2},\quad \langle x\rangle = (1+|x|^2)^{1/2}.
\end{equation}
We also define the space $H^s_\delta$ for $s>0$ as the completion of $C^\infty_0(\RR^3)$ with respect to the norm $\|\cdot\|_{H^s_\delta}$ defined by
\begin{equation}\notag 
	\label{eq3.10} \|u\|_{H^s_\delta} = \left(\int_{\RR^3} \langle x\rangle^{2\delta}|(I-\Delta)^{\frac{s}{2}}u|^2 dx\right)^{1/2}.
\end{equation}
Here $(I-\Delta)^{\frac{s}{2}}u$ is defined as the inverse Fourier transform of $\langle\xi\rangle\hat{u}(\xi)$, where $\hat{u}(\xi)$ is the Fourier transform of $u(x)$. When $\delta=0$, this is the standard Sobolev space $H^s(\RR^3)$ of order $s$.

It is proven in \cite{Uhlmann1987} that for $|\zeta|\geq c >0$ and $v\in L^2_{\delta+1}$ with $-1<\delta<0$, equation
\begin{equation}
	\label{eq3.11} (\Delta-2\zeta\cdot D)u=v, 
\end{equation}
admits a unique weak solution $u\in L^2_\delta$ with
\begin{equation}
	\label{eq3.12} \|u\|_{L^2_\delta}\leq C \frac{\|v\|_{L^2_{\delta+1}}}{|\zeta|},
\end{equation}
for some constant $C=C(\delta,c)$. We note that $(\Delta + 2i\zeta\cdot D)$ and $(I-\Delta)^s$ are constant coefficient operators and hence commute. We deduce that for $v\in H^s_{\delta+1}$, for $s\geq 0$, \eqref{eq3.11} admits a unique solution $u\in H^s_\delta$ with
\begin{equation}
	\label{eq3.13} \|u\|_{H^s_\delta}\leq C(\delta,c) \frac{\|v\|_{H^s_{\delta+1}}}{|\zeta|}.
\end{equation}
One defines the integral operator $G_\zeta:H^s_{\delta+1}(\RR^3) \rightarrow H^s_\delta(\RR^3)$ by
\begin{equation}\notag
	\label{eq3.14} G_\zeta(v):=\mathcal{F}^{-1}\left(\frac{\hat{v}}{|\xi|^2 + 2\zeta\cdot\xi}\right),
\end{equation}
where $\mathcal{F}^{-1}$ is the inverse Fourier transform. Clearly, for $|\zeta|\geq c>0$, $G_\zeta$ is the inverse operator of $(\Delta + 2\zeta\cdot D)$ and $G_\zeta$ is bounded by
\begin{equation}
	\label{eq3.15} \|G_\zeta\|\leq\frac{C}{|\zeta|},
\end{equation}
for some constant $C=C(\delta,c)$.

As in \cite{Salo2009}, we choose
$\zeta\in\CC^3$ such that $\zeta\cdot\zeta = k^2$ and $|\zeta|$
large. Compared to \cite{Ola1996,Salo2009}, the following proposition
extends the construction of CGO solutions to $H^s_\delta(\Omega)$,
while requiring more regularities in $\mu,\gamma$.

\begin{prop}
Let $s\geq \frac{3}{2}$ and $-1<\delta<0$. Assume that $\mu,\gamma\in
H^{s+2}(\Omega)$ and $\mu,\gamma$ are constant outside a compact
set. There exists a CGO solution in $H^s_\delta(\Omega)$ of the form
\begin{equation}\notag 
	Z = e^{i\zeta\cdot x}(Z_0 + Z_r)
\end{equation}
to equation \eqref{eq3.7}, such that $Z_0\in \CC^3$ is a constant vector and
\begin{equation}\notag 
	\|Z_r\|_{H^s_\delta(\Omega)}\leq \mathcal{O}(|\zeta|^{-1}).
\end{equation}
\end{prop}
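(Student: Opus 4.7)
The plan is to follow the standard Sylvester--Uhlmann fixed-point scheme adapted to the weighted Sobolev scale $H^s_\delta$. Substituting the ansatz $Z = e^{i\zeta\cdot x}(Z_0 + Z_r)$ into \eqref{eq3.7}, using $\zeta\cdot\zeta = k^2$ and the fact that $Z_0$ is a constant vector, the conjugation by $e^{i\zeta\cdot x}$ collapses $-(\Delta + k^2)I_8$ to the conjugated Laplacian acting on $Z_r$ alone, and the equation reduces to
\[
    (\Delta - 2\zeta\cdot D) Z_r \;=\; Q_{\mu,\gamma}(Z_0 + Z_r).
\]
Applying the inverse operator controlled by \eqref{eq3.13}--\eqref{eq3.15} (with the appropriate sign of $\zeta$), this becomes the fixed-point equation
\[
    Z_r \;=\; T(Z_r) \;:=\; G_\zeta\bigl(Q_{\mu,\gamma}(Z_0 + Z_r)\bigr) \qquad \text{in } H^s_\delta(\RR^3).
\]

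To run Banach fixed point for $|\zeta|$ large, the essential step is the multiplier estimate
\[
    \|Q_{\mu,\gamma}\, u\|_{H^s_{\delta+1}(\RR^3)} \;\leq\; C\,\|u\|_{H^s_\delta(\RR^3)},
\]
with $C$ independent of $\zeta$. Two facts are used. First, $Q_{\mu,\gamma}$ is compactly supported: outside the compact set where $\mu-\mu_0$ and $\gamma-\gamma_0$ vanish, one has $D\alpha = D\beta = D\kappa = 0$, $\Delta\alpha = \Delta\beta = 0$ and $\kappa = k$, so the explicit formula in Lemma \ref{thm:operator_PP} yields $\tilde Q_{\mu,\gamma} = -k^2 I_8$, whence $Q_{\mu,\gamma} = k^2 I_8 + \tilde Q_{\mu,\gamma}$ vanishes. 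Second, $Q_{\mu,\gamma}\in H^s(\RR^3)$: since $\mu,\gamma\in H^{s+2}$ and $\mu>0$, $\gamma\neq 0$, the logarithms $\alpha,\beta$ and $\kappa = \omega(\gamma\mu)^{1/2}$ lie in $H^{s+2}$, and second derivatives appearing in $\tilde Q_{\mu,\gamma}$ are in $H^s$; the Banach algebra property of $H^s$ for $s>3/2$ handles the products. Because $Q_{\mu,\gamma}$ is compactly supported, $\langle x\rangle^{2(\delta+1)}$ and $\langle x\rangle^{2\delta}$ are equivalent on its support, so multiplication by $Q_{\mu,\gamma}$ sends $H^s_\delta$ boundedly into $H^s_{\delta+1}$. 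Combined with $\|G_\zeta\| \leq C/|\zeta|$ from \eqref{eq3.13}, one obtains $\|T(u)-T(v)\|_{H^s_\delta} \leq (C/|\zeta|)\,\|u-v\|_{H^s_\delta}$, so $T$ is a contraction for $|\zeta|$ large. The Banach fixed-point theorem then produces a unique $Z_r\in H^s_\delta$, and setting $v=0$ in the contraction inequality together with the fact that $Q_{\mu,\gamma}Z_0$ is a fixed compactly supported element of $H^s_{\delta+1}$ yields $\|Z_r\|_{H^s_\delta}\leq C|\zeta|^{-1}$.

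The principal obstacle is the multiplier estimate $Q_{\mu,\gamma}\cdot : H^s_\delta\to H^s_{\delta+1}$; the rest of the argument is a direct combination of \eqref{eq3.13} with Banach fixed point. This is precisely the point where the hypotheses $\mu,\gamma\in H^{s+2}$ (needed because $\tilde Q_{\mu,\gamma}$ involves second derivatives of $\alpha,\beta$) and the extension making $\mu-\mu_0, \gamma-\gamma_0$ compactly supported (which cancels the stray $k^2 I_8$ and turns $Q_{\mu,\gamma}$ into a compactly supported matrix coefficient) are both invoked. Once these are in place the construction proceeds exactly in parallel with the scalar case of \cite{Uhlmann1987}, lifted to $H^s_\delta$ by the commutation of $(I-\Delta)^{s/2}$ with the constant-coefficient operator $\Delta - 2\zeta\cdot D$.
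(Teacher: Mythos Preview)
Your proof is correct and follows essentially the same route as the paper: conjugate by $e^{i\zeta\cdot x}$, use the compact support and $H^s$-regularity of $Q_{\mu,\gamma}$ to get the multiplier bound $H^s_\delta\to H^s_{\delta+1}$, and then invoke the $\mathcal{O}(|\zeta|^{-1})$ bound on $G_\zeta$ to close the argument for large $|\zeta|$. The only cosmetic difference is that the paper writes the solution via the Neumann-series inverse $Z_r = -(I + G_\zeta(Q_{\mu,\gamma}\cdot))^{-1}G_\zeta(Q_{\mu,\gamma}Z_0)$ rather than invoking Banach fixed point, but these are the same argument (and your explicit justification of why $Q_{\mu,\gamma}$ is compactly supported is a welcome addition).
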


\begin{proof}
Since $\zeta\cdot\zeta = k^2$, we have
\begin{align}
	\label{eq3.16} 0  = & e^{-i\zeta\cdot x}(-(\Delta+k^2)I_8 + Q_{\mu,\gamma}) e^{i\zeta\cdot x}(Z_0 + Z_r)\\
	\notag = & (-\Delta + 2\zeta\cdot D)Z_r + Q_{\mu,\gamma}(Z_0 + Z_r),
\end{align}
or
\begin{equation}
	\label{eq3.18} (-\Delta+2\zeta\cdot D)Z_r + Q_{\mu,\gamma}Z_r = -Q_{\mu,\gamma}Z_0.
\end{equation}
When $s\geq \frac{3}{2}$, $H^s(\RR^3)$ is an algebra. By the
assumption, $Q_{\mu,\gamma}\in H^s(\Omega)$ is compactly supported and
thus multiplication by $Q_{\mu,\gamma}$ is a bounded operator mapping
$H^s_{\delta}(\RR^3)$ to $H^s_{\delta+1}(\RR^3)$. In particular,
$Q_{\mu,\gamma}Z_0\in H^s_{\delta+1}(\RR^3)$ for any constant
$Z_0$. The estimate in \eqref{eq3.15} implies that $I +
G_\zeta(Q_{\mu,\gamma}\cdot)$ is well defined and is an invertible mapping
$H^s_{\delta}(\RR^3)$ to $H^s_\delta(\RR^3)$ for $|\zeta|$
sufficiently large. We can then define
\begin{equation}
	\label{eq3.19} Z_r = -(I + G_\zeta(Q_{\mu,\gamma}\cdot))^{-1}G_\zeta(Q_{\mu,\gamma}Z_0) \in H^s_\delta(\RR^3).
\end{equation}
It is immediate that
\begin{equation}
	\label{eq3.17} Z_r+G_\zeta(Q_{\mu,\gamma}Z_r) = -G_\zeta(Q_{\mu,\gamma}Z_0).
\end{equation}
$Z_r$ also satisfies \eqref{eq3.16} and \eqref{eq3.18}. Thus $e^{i\zeta\cdot x}(Z_0 + Z_r)\in H^s_\delta(\RR^3)$ is a solution to \eqref{eq3.7}. We also see from \eqref{eq3.19} that
\begin{equation}
	\|Z_r\|_{H^s_\delta} \leq \mathcal{O}(|\zeta|^{-1})\|Q_{\mu,\gamma}Z_0\|_{H^s_{\delta+1}} \leq \mathcal{O}(|\zeta|^{-1}).
\end{equation}
\end{proof}

With CGO solutions to the matrix \Schrodinger equation, we can now construct solutions to the original Maxwell equations. We define $Y$ by \eqref{eq3.6} and thus $Y$ satisfies \eqref{eq3.5}. Note that the solution $Y$ can be written as
\begin{equation}
	Y = e^{i\zeta\cdot x} (Y_0 + Y_r),
\end{equation}
where
\begin{equation}
	Y_0 = P^\mp(\zeta)Z_0,\quad Y_r = (P^\mp(\zeta) - W^t_{\mu,\gamma})Z_r - W^t_{\mu,\gamma}Z_0,
\end{equation}
satisfying 
\begin{equation}
	\|Y_0/|\zeta|\|_{L^2} = \mathcal{O}(1),\quad \|Y_r/|\zeta|\|_{H^{s-1}_{\delta}} = \mathcal{O}(|\zeta|^{-1}).
\end{equation}
Here, $Y_0$ is a constant vector depending on the choice of $\zeta$
and $Z_0$. We denote the components of $Y$ by
\begin{equation}\notag 
	Y = (Y^\Phi,\; (Y^H)^t,\; Y^\Psi,\; (Y^E)^t)^t.
\end{equation}
We recall that \eqref{eq3.5} is equivalent to the original Maxwell equations only if $Y^\Phi=Y^{\Psi}=0$. This condition can be satisfied with properly chosen $Z_0$ according to the following lemma, which is cited from \cite{Salo2009} without proof.

\begin{lemma}[\cite{Salo2009}]\label{thm:lemma3}
If
\begin{equation}\notag
	((P^\mp(\zeta)-k)Z_0)^\Phi = ((P^\mp(\zeta)-k)Z_0)^\Psi = 0,
\end{equation}
then we have 
\begin{equation}\notag
	Y^t = (0\;\; (Y^H)^t\;\; 0\;\; (Y^E)^t)
\end{equation}
for $|\zeta|$ sufficiently large.
\end{lemma}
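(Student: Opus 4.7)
The plan is to show that $Y^\Phi$ and $Y^\Psi$ each satisfy a scalar \Schrodinger equation on $\RR^3$ with compactly supported potential, verify that $\tilde Y^\Phi:=e^{-i\zeta\cdot x}Y^\Phi$ and $\tilde Y^\Psi:=e^{-i\zeta\cdot x}Y^\Psi$ lie in $L^2_\delta(\RR^3)$ under the hypothesis, and then invoke Sylvester--Uhlmann-type CGO uniqueness to force both to vanish for $|\zeta|$ large.

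First I would derive the two scalar equations from the eight-component system $(P^\mp(D) + W_{\mu,\gamma})Y = 0$, which follows from \eqref{eq3.6}, Lemma~\ref{thm:operator_PP}, and \eqref{eq3.7}. Written out, the system splits into two scalar (divergence-type) rows and two vector rows. Taking the divergence of the vector row coupling $(Y^\Psi, Y^H, Y^E)$, using the scalar $\Psi$-row to eliminate $\kappa\, D\cdot Y^H$ and the same vector row to eliminate $D\alpha\cdot(D\wedge Y^E)$, the remaining terms linear in $Y^H$ collect into a factor $D\kappa - \tfrac12\kappa(D\alpha+D\beta)$, which vanishes identically because $\kappa=\omega e^{(\alpha+\beta)/2}$. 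The surviving scalar equation is
\[
\bigl(-\Delta - \kappa^2 - \tfrac12\Delta\alpha - \tfrac14(D\alpha)^2\bigr)Y^\Psi = 0,
\]
and the symmetric calculation yields $\bigl(-\Delta - \kappa^2 - \tfrac12\Delta\beta - \tfrac14(D\beta)^2\bigr)Y^\Phi = 0$. Since $\mu,\gamma$ are constant outside a compact set, each potential differs from $-k^2$ by a compactly supported function.

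Next I would show that $\tilde Y^\Phi \in L^2_\delta(\RR^3)$. From \eqref{eq3.6} one reads off the scalar component $Y^\Phi = D\cdot Z^E - \kappa Z^\Phi - \tfrac12 D\beta\cdot Z^E$, and substituting the CGO ansatz $Z = e^{i\zeta\cdot x}(Z_0+Z_r)$ gives
\[
\tilde Y^\Phi = \bigl(\zeta\cdot Z_0^E - k Z_0^\Phi\bigr) + \bigl((k-\kappa)Z_0^\Phi - \tfrac12 D\beta\cdot Z_0^E\bigr) + \bigl(\zeta\cdot Z_r^E + D\cdot Z_r^E - \kappa Z_r^\Phi - \tfrac12 D\beta\cdot Z_r^E\bigr).
\]
The first group equals $((P^\mp(\zeta)-k)Z_0)^\Phi$ and vanishes by hypothesis. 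The second is compactly supported because $k-\kappa$ and $D\beta$ are. The third lies in $L^2_\delta$ by the previous proposition (with $Z_r\in H^s_\delta$, $s\geq 3/2$), together with the fact that $\kappa$ differs from $k$ only on a compact set. Hence $\tilde Y^\Phi\in L^2_\delta$, and the analogous argument with $\alpha,\beta$ swapped places $\tilde Y^\Psi$ in $L^2_\delta$.

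Finally, the scalar equation for $Y^\Phi$ conjugates to $(\Delta-2\zeta\cdot D)\tilde Y^\Phi = \tilde q_\Phi \tilde Y^\Phi$ with $\tilde q_\Phi:=k^2-\kappa^2-\tfrac12\Delta\beta-\tfrac14(D\beta)^2$ compactly supported. Since $\tilde q_\Phi\tilde Y^\Phi$ is compactly supported and $\tilde Y^\Phi - G_\zeta(\tilde q_\Phi\tilde Y^\Phi)\in L^2_\delta$ lies in the kernel of $(\Delta-2\zeta\cdot D)$, the uniqueness statement following \eqref{eq3.11} gives the Lippmann--Schwinger identity $\tilde Y^\Phi = G_\zeta(\tilde q_\Phi\tilde Y^\Phi)$. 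Multiplication by $\tilde q_\Phi$ is bounded $L^2_\delta \to L^2_{\delta+1}$, so by \eqref{eq3.15} the operator $G_\zeta\circ \tilde q_\Phi$ has $L^2_\delta$-norm of order $|\zeta|^{-1}$, making $I - G_\zeta\circ \tilde q_\Phi$ invertible for $|\zeta|$ large. Hence $\tilde Y^\Phi = 0$ and $Y^\Phi = 0$; analogously $Y^\Psi = 0$. The main obstacle is the algebraic cancellation in step one: the decoupling of $Y^\Psi$ from $Y^H$ (and of $Y^\Phi$ from $Y^E$) hinges entirely on the structural identity $D\kappa = \tfrac12\kappa(D\alpha+D\beta)$ that is built into $\kappa = \omega\sqrt{\mu\gamma}$.
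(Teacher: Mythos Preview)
Your proposal is correct and follows exactly the argument of Caro--Ola--Salo \cite{Salo2009} (which in turn goes back to Ola--Somersalo \cite{Ola1996}); the paper itself does not supply a proof but cites that reference, so there is nothing further to compare. One minor imprecision in your sketch of step one: after taking the divergence of the vector row and using the scalar $\Psi$-row, the $Y^H$-terms collect into $(D\kappa-\tfrac12\kappa D\beta)\cdot Y^H=\tfrac12\kappa\,D\alpha\cdot Y^H$, which does not vanish directly; it is only after substituting the vector row once more for $\kappa Y^H$ that the resulting $D\alpha\cdot(D\wedge Y^E)$ contribution cancels against the divergence of the $D\alpha\wedge Y^E$ term, leaving the clean scalar equation you state.
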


\begin{prop}
The Maxwell equations have a solution of the form 
\begin{equation}\notag
	X=e^{i\zeta\cdot x}(X_0+X_r) 
\end{equation}
satisfying
\begin{equation}\notag
	\|X_0/|\zeta|\|_{H^{s+2}_{\delta}} = \mathcal{O}(1),\quad \|X_1/|\zeta|\|_{H^{s-1}_{\delta}} = \mathcal{O}(|\zeta|^{-1}).
\end{equation}
\end{prop}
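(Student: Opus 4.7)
The plan is to assemble the Maxwell solution $X$ from the matrix \Schrodinger CGO solution $Z$ of the previous proposition by reversing the reductions \eqref{eq3.8} and \eqref{eq3.6}, after first tuning the constant vector $Z_0$ so that the gauge condition of Lemma \ref{thm:lemma3} is satisfied. Concretely, I would first choose a nonzero $Z_0\in\CC^8$ satisfying
\[
((P^\mp(\zeta)-k)Z_0)^\Phi = ((P^\mp(\zeta)-k)Z_0)^\Psi = 0,
\]
which is a pair of scalar linear conditions on the eight components of $Z_0$, leaving a six-dimensional family of admissible constants. Then apply the previous proposition to obtain the CGO solution $Z = e^{i\zeta\cdot x}(Z_0+Z_r)\in H^s_\delta(\RR^3)$ of \eqref{eq3.7} with $\|Z_r\|_{H^s_\delta}=\mathcal{O}(|\zeta|^{-1})$.

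Next, set $Y:=(P^\mp(D)-W_{\mu,\gamma}^t)Z$, as in \eqref{eq3.6}. Lemma \ref{thm:operator_PP} combined with \eqref{eq3.7} gives $(P^\mp(D)+W_{\mu,\gamma})Y=0$, i.e.\ $Y$ solves \eqref{eq3.5}. Expanding the operator on the CGO ansatz yields
\[
Y = e^{i\zeta\cdot x}(Y_0+Y_r),\qquad Y_0 = P^\mp(\zeta)Z_0,\qquad Y_r = (P^\mp(\zeta)-W^t_{\mu,\gamma})Z_r - W^t_{\mu,\gamma}Z_0,
\]
with $|Y_0|=\mathcal{O}(|\zeta|)$ and $\|Y_r/|\zeta|\|_{H^{s-1}_\delta}=\mathcal{O}(|\zeta|^{-1})$ as recorded in the previous paragraph of the paper. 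By construction, $Z_0$ was chosen exactly so that Lemma \ref{thm:lemma3} applies, hence $Y^\Phi = Y^\Psi = 0$ for $|\zeta|$ sufficiently large, which is precisely the condition under which \eqref{eq3.5} reduces to the original Maxwell system for the $(H,E)$-block.

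Finally, undo the rescaling \eqref{eq3.8} by defining $X:=\mathrm{diag}(\mu^{-1/2},\gamma^{-1/2})Y$. Since the $\Phi$ and $\Psi$ components of $Y$ vanish, so do those of $X$, and the $H$ and $E$ components of $X$ constitute a bona fide solution to the original Maxwell equations. Write $X = e^{i\zeta\cdot x}(X_0+X_r)$ with
\[
X_0 := \mathrm{diag}(\mu^{-1/2},\gamma^{-1/2})Y_0,\qquad X_r := \mathrm{diag}(\mu^{-1/2},\gamma^{-1/2})Y_r.
\]
Since \eqref{eq2.0} and $\mu,\gamma\in H^{s+2}(\Omega)$ imply $\mu^{-1/2},\gamma^{-1/2}\in H^{s+2}(\Omega)$, and $Y_0$ is a constant vector with $|Y_0|/|\zeta|$ bounded, we get $\|X_0/|\zeta|\|_{H^{s+2}_\delta(\Omega)}=\mathcal{O}(1)$; and because $H^{s+2}$ is a multiplier on $H^{s-1}_\delta$ for $s\geq 3/2$, the estimate on $Y_r$ transfers to $\|X_r/|\zeta|\|_{H^{s-1}_\delta}=\mathcal{O}(|\zeta|^{-1})$.

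The main obstacle is the algebraic step of verifying that the gauge conditions from Lemma \ref{thm:lemma3} leave enough freedom in the constant $Z_0$ to produce a $Y_0 = P^\mp(\zeta)Z_0$ of full size $|\zeta|$ with the desired nontrivial $(H,E)$-profile — in particular, that the two scalar constraints are independent of the direction one wishes to prescribe for the leading CGO amplitude. Once this is checked, the remainder is essentially bookkeeping: propagating the $H^s_\delta$ control on $Z_r$ through the first-order operator $P^\mp(D)-W^t_{\mu,\gamma}$ and the $H^{s+2}$-multiplication by $\mu^{-1/2},\gamma^{-1/2}$.
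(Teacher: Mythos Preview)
Your proposal is correct and follows essentially the same route as the paper: the paper's proof is the one-liner ``follows directly from \eqref{eq3.8} and Lemma \ref{thm:lemma3},'' and you have simply unpacked that into the chain $Z \mapsto Y \mapsto X$ with the corresponding estimates. The only substantive addition in the paper is that, immediately after the proposition, it resolves what you call the ``main obstacle'' by writing down an explicit $Z_0$ (parametrized by arbitrary $a,b\in\CC^3$) satisfying the gauge condition of Lemma \ref{thm:lemma3}, which also makes the leading profile $X_0$ computable.
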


The proof of the above proposition follows directly from \eqref{eq3.8}
and Lemma \ref{thm:lemma3}. An explicit choice of $Z_0$ satisfying
Lemma \ref{thm:lemma3} is given by
\begin{equation}\notag
Z_0 = \frac{1}{|\zeta|} 
\left(\begin{array}{c}
	\zeta\cdot a\\
	kb\\
	\zeta\cdot b\\
	ka
\end{array}\right),
\end{equation}
where $a,b$ are any vectors in $\CC^3$. It follows that
\begin{equation}\notag
X_0 =  \mathrm{diag}(\mu^{-1/2},\gamma^{-1/2}) P^\mp(\zeta)Z_0 = \frac{1}{|\zeta|} 
\left(\begin{array}{l}
	ka\cdot\zeta/\mu^{-1/2}\\
	((b\cdot\zeta)\zeta - k\zeta\times a)/\mu^{-1/2}\\
	kb\cdot\zeta/\gamma^{-1/2}\\
	((a\cdot \zeta)\zeta + k\zeta\times b)/\gamma^{-1/2}
\end{array}\right).
\end{equation}
We choose $b\in\CC^3$ such that $|b|=\mathcal{O}(|\zeta|)$, and define a CGO solution as
\begin{equation}
\label{eq:CGOE}
\check{E} = \frac{1}{|\zeta|}X^E = e^{i\zeta\cdot x}(\check{E}_0 + \check{E}_r),
\end{equation}
where
\begin{equation}
	\check{E}_0 = \frac{\sqrt{\gamma}}{|\zeta|^2}((a\cdot\zeta)\zeta + k\zeta\times b) \in H^{s+2}(\Omega)
\end{equation}
and
\begin{equation}
	\|\check{E}_0\|_{H^{s+2}(\Omega)} = \mathcal{O}(1), \quad \|\check{E}_r\|_{H^{s-1}_\delta(\Omega)} = \mathcal{O}(|\zeta|^{-1}).
\end{equation}

Compared to the CGO solutions in \cite{Chen2013}, our new construction
applies to variable $\mu$, while the required regularity of
parameters $\mu,\gamma$ is one order higher. Here and below, we use a
check sign, for example $\check{E}$, to indicate corresponding
variables computed from the CGO solutions.

\section{Inversion of Maxwell's equations with internal data}\label{se:Maxwell}

In this section, we state and prove the uniqueness and stability results of
the inverse problem of Maxwell's equations with internal data. In addition to Sobolev spaces $H^s(\Omega)$, we also work on continuous function spaces $C^d(\Omega)$. Here $s$ and $d$ are related by $s = \frac{3}{2}+d+2+\iota$ for some small
$\iota>0$. We assume that $\mu \in H^{\frac{3}{2}+d+4+\iota}(\Omega)$
is known and define $\mathcal{M}$ to be the set of coefficients as
\begin{multline}
    \label{eq2.5} \mathcal{M}:=\{(L,\gamma)\in C^{d+1}(\overline{\Omega})\times H^{\frac{3}{2}+d+4+\iota}(\Omega) : d\geq 3,\; \iota>0\;\mathrm{is\; small,\; and}\\
    0 \mathrm{\;is \;not \; an\; eigenvalue\; of \;}D\wedge(\mu^{-1}D\wedge \cdot) + \omega^2\gamma\}.
\end{multline}
Our proof makes use of CGO solutions, constructed in previous section,
of the form \eqref{eq:CGOE}.  The trace of the CGO solutions in
$H^{d+3+\iota}(\partial\Omega)$ describes how we should control the
boundary values of electric fields. We note that the dominant term,
$e^{\mathrm{i}\zeta\cdot x}\check E_0$, of a CGO solution is
characterized by parameters $\zeta, a, b$ and the coefficient
$\gamma$. Therefore, for a particular choice of $\zeta, a$ and $b$ and
$\gamma$ known on the boundary of the domain, the restriction of
$e^{\mathrm{i}\zeta\cdot x}\check E_0$ to the boundary gives an
explicit form of the source distribution. In our inversion method,
we construct two CGO
solutions, according to which we define two electric sources, denoted
by $(G_1,G_2)$. The following theorems
state that, with electric sources $(G_1,G_2)$,
we can invert $\Lambda_G$ uniquely and stably.

\begin{thm}\label{thm:unique}
Let $d\geq 3$. Let $\Omega$ be an open bounded subset of $\RR^3$ with
boundary $\partial\Omega$ of class $C^d$. Let $(L,\gamma)$ and
$(\tilde{L},\tilde{\gamma})$ be two elements in $\mathcal{M}$ with
$L|_{\partial\Omega} = \tilde{L}|_{\partial\Omega}$. Let
$\Sigma:=(\Sigma_1,\Sigma_2)$ and
$\tilde{\Sigma}:=(\tilde{\Sigma}_1,\tilde{\Sigma}_2)$ be two sets of
internal data on $\Omega$ for coefficients $(L,\gamma)$ and
$(\tilde{L},\tilde{\gamma})$, respectively, and with boundary values
$G:=(G_1,G_2)$.

There is a non-empty open set of $G$ in
$(H^{d+3+\iota}(\partial\Omega))^2$, defined as a neighborhood of
the trace of CGO solutions in \eqref{eq:CGOE}, for small
$\iota>0$, such that if $\Sigma_j=\tilde{\Sigma}_j$ in
$C^{d+1}(\overline\Omega)$, $j=1,2$, we have $(L,\gamma) =
(\tilde{L},\tilde{\gamma})$.
\end{thm}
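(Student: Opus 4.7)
The plan is to follow the CGO/internal-data paradigm: use the pointwise equality $LE_j = \tilde L\tilde E_j = \Sigma_j$ to get $\tilde E_j = \eta E_j$ with $\eta := L/\tilde L$, extract from this a pointwise linear condition on $\nabla\eta$ by leading-order CGO asymptotics, conclude $\eta \equiv 1$ from the boundary datum $\eta|_{\partial\Omega} = 1$ (which follows from $L|_{\partial\Omega} = \tilde L|_{\partial\Omega}$), and then recover $\gamma = \tilde\gamma$ from the collapsed Maxwell equation.

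Substituting $\tilde E_j = \eta E_j$ into the curl-curl Maxwell equation \eqref{eq2.1} for $(\tilde\gamma, \mu)$ and subtracting $\eta$ times the equation for $E_j$, the product-rule expansion combined with the identity $D\wedge(A\wedge B) = A(D\cdot B) - B(D\cdot A) + (B\cdot D)A - (A\cdot D)B$ yields the master identity
\begin{equation}\notag
D\wedge(\mu^{-1} D\eta\wedge E_j) + D\eta\wedge(\mu^{-1} D\wedge E_j) + \omega^2(\tilde\gamma - \gamma)\eta\, E_j = 0.
\end{equation}
Inserting $E_j = e^{i\zeta_j\cdot x}(\check E_{0,j} + \check E_{r,j})$ from \eqref{eq:CGOE} and keeping only the dominant $|\zeta_j|$-power (the contributions where $D$ falls on the exponential factor), this reduces pointwise, after division by $|\zeta_j|$, to
\begin{equation}\notag
(\check E_{0,j}\cdot\nabla\eta)\hat\zeta_j - 2(\hat\zeta_j\cdot\nabla\eta)\check E_{0,j} = O(|\zeta_j|^{-1}), \qquad \hat\zeta_j := \zeta_j/|\zeta_j|,
\end{equation}
with the $(\tilde\gamma - \gamma)$ coupling and every second-derivative-in-$\eta$ contribution absorbed into the $O(|\zeta_j|^{-1})$ remainder; the needed $C^{d+1}(\overline\Omega)$ control on that remainder follows from the weighted Sobolev bound on $\check E_{r,j}$ furnished by the previous section.

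Provided $\hat\zeta_j$ and $\check E_{0,j}(x)$ are linearly independent — which is ensured by the CGO construction with $|b_j|=\mathcal{O}(|\zeta_j|)$ so that the $\zeta_j\wedge b_j$ term survives in $\check E_{0,j}$ transverse to $\zeta_j$ — matching coefficients in the leading-order identity forces $\nabla\eta(x)$ to annihilate both $\hat\zeta_j$ and $\check E_{0,j}(x)$ in the complex bilinear product, i.e. $\nabla\eta(x)$ lies in the one-dimensional subspace $\{\hat\zeta_j,\check E_{0,j}(x)\}^{\perp}$. With two CGO measurements whose parameters $(\zeta_j,a_j,b_j)$ sit on the complex quadric $\zeta_j\cdot\zeta_j=k^2$ and are chosen so that $\{\hat\zeta_1,\check E_{0,1}(x),\hat\zeta_2,\check E_{0,2}(x)\}$ spans $\CC^3$ at every $x\in\overline\Omega$, the combined pointwise constraints force $\nabla\eta(x)=0$; since $\nabla\eta$ is independent of $|\zeta_j|$ this yields $\nabla\eta\equiv 0$ and hence $\eta\equiv 1$, i.e. $L=\tilde L$ in $\Omega$. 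Then $\tilde E_j=E_j$; subtracting the two Maxwell systems gives $\omega^2(\gamma-\tilde\gamma)E_j=0$, and since $\check E_{0,j}=\frac{\sqrt\gamma}{|\zeta_j|^2}((a_j\cdot\zeta_j)\zeta_j + k\zeta_j\wedge b_j)$ is nonvanishing on $\overline\Omega$ for generic polarizations and dominates the $O(|\zeta_j|^{-1})$ remainder in $C^{d+1}(\overline\Omega)$, we conclude $\gamma=\tilde\gamma$. The extension from exact CGO traces to an open neighborhood in $(H^{d+3+\iota}(\partial\Omega))^2$ comes from the fact that all of the nondegeneracy conditions used are open in $G$ and the forward solution map of \eqref{eq2.1} is continuous in the boundary datum.

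The main obstacle I anticipate is obtaining the uniform-in-$x$ span condition on $\{\hat\zeta_1,\check E_{0,1}(x),\hat\zeta_2,\check E_{0,2}(x)\}$ over all of $\overline\Omega$: pointwise nondegeneracy is immediate, but uniformity demands careful exploitation of the freedom in choosing the real parts of $\zeta_1,\zeta_2$ (transverse to each other) together with the polarizations $a_j,b_j$ subject to the quadric constraint $\zeta_j\cdot\zeta_j=k^2$, leveraging the explicit structure of $\check E_{0,j}$. Keeping the remainder $\check E_{r,j}$ under control in $C^{d+1}(\overline\Omega)$ so that the leading-order extraction is legitimate is the other technical point, and is what ultimately dictates the specific Sobolev exponents appearing in the definition \eqref{eq2.5} of $\mathcal M$ via Sobolev embedding.
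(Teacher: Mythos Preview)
Your approach has a genuine gap at the step ``since $\nabla\eta$ is independent of $|\zeta_j|$ this yields $\nabla\eta\equiv 0$.'' The hypotheses of the theorem give you matching internal data $\Sigma_j=\tilde\Sigma_j$ for \emph{one} fixed boundary pair $G=(G_1,G_2)$, lying near the trace of CGO solutions with some fixed large $|\zeta_j|\sim 1/h$. Your master identity, after dividing by $|\zeta_j|$, only says
\[
(\check E_{0,j}\cdot\nabla\eta)\hat\zeta_j - 2(\hat\zeta_j\cdot\nabla\eta)\check E_{0,j} = O(h),
\]
where the $O(h)$ remainder genuinely contains the unknown $(\tilde\gamma-\gamma)\eta E_j$ term and second derivatives of $\eta$; it is small but \emph{not} zero. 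From this you can at best conclude $|\nabla\eta|\le Ch$. To pass to $\nabla\eta=0$ you would need the estimate for a sequence $h\to 0$, i.e.\ a whole family of boundary data and matching internal data, which the theorem does not provide. The quantity $\nabla\eta$ is indeed independent of $|\zeta_j|$, but your constraint on it is tied to the single $|\zeta_j|$ dictated by the given $G$.

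The paper's proof circumvents this by first deriving an \emph{exact} scalar transport equation $\phi\cdot\nabla L+\psi L=0$, obtained from $F[E_1]\cdot E_2-F[E_2]\cdot E_1=0$ after substituting $E_j=\Sigma_j/L$; the crucial point is that $\phi,\psi$ are explicit expressions in $\Sigma_1,\Sigma_2,\mu$ alone, with no residual dependence on the unknowns $L,\gamma$. Hence when $\Sigma=\tilde\Sigma$ the \emph{same} transport equation is satisfied by both $L$ and $\tilde L$, with the same boundary values. The CGO asymptotics are then invoked only qualitatively, to show that for $h$ small (but fixed) the vector field $\phi$ is uniformly close to the nonzero vector $i\vartheta^2\zeta_0$, so its integral curves join every interior point to $\partial\Omega$ and the method of characteristics yields $L=\tilde L$ exactly. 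Recovery of $\gamma$ then follows as you describe. The difference in strategy is precisely that the paper extracts an exact first-order equation before doing any asymptotics, whereas your route performs the asymptotic reduction first and is left with an approximate relation that cannot be upgraded to an equality using a single datum.
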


From the proof it will become apparent that the implication $L =
\tilde{L}$ requires that $d \geq 1$ and that the implication $\gamma =
\tilde{\gamma}$ requires that $d \geq 3$. Here and in the following,
we shall abuse the notation and use $C^{d}(\overline{\Omega})$ to
denote either set of complex-valued functions or set of vector-valued
functions the elements of which have up to $d$-th order continuous
derivatives. The function space $(H^{d+3+\iota}(\partial\Omega))^2$ is
an abbreviation of the product space
$H^{d+3+\iota}(\partial\Omega)\times H^{d+3+\iota}(\partial\Omega))$.

The proof of the uniqueness theorem establishes an explicit
reconstruction. Following this reconstruction we also prove a
Lipschitz stability result. To consider the stability of the
reconstruction, we need to restrict it to a subset of $\Omega$. Let
$\zeta_0$ be a constant unit vector close to $\zeta_1/|\zeta_1|$,
where $\zeta_1$ occurs in the CGO solution. Explicit choice of 
$\zeta_0$ and $\zeta_1$ is given in \eqref{eq:zeta} and 
\eqref{eq:zeta1}. Define $\Omega_1$
to be the subset of $\Omega$ obtained by removing a neighborhood of
every point $x_0\in\partial\Omega$ such that $n(x_0)\cdot\zeta_0=0$,
where $n(x_0)$ is the outward normal of $\partial\Omega$ at $x_0$.

\begin{thm}\label{thm:stability1}
Let $d\geq 3$. Let $\Omega$ be convex with boundary $\partial\Omega$
of class $C^d$ and $\Omega_1$ be defined as above. Let $(L,\gamma)$
and $(\tilde{L},\tilde{\gamma})$ be two elements in $\mathcal{M}$ with
$L|_{\partial\Omega} = \tilde{L}|_{\partial\Omega}$. Let
$\Sigma:=(\Sigma_1,\Sigma_2)$ and
$\tilde{\Sigma}:=(\tilde{\Sigma}_1,\tilde{\Sigma}_2)$ be two sets of
internal data on $\Omega$ for coefficients $(L,\gamma)$ and
$(\tilde{L},\tilde{\gamma})$, respectively, and with boundary values
$G:=(G_1,G_2)$.

There is a non-empty open set of $G$ in
$(H^{d+3+\iota}(\partial\Omega))^2$, defined as a neighborhood of
the trace of CGO solutions in \eqref{eq:CGOE}, for small
$\iota>0$ such that
\begin{equation}
\label{eq2.6}	\|L-\tilde{L}\|_{C^{d-1}(\overline{\Omega}_1)} + \|\gamma-\tilde{\gamma}\|_{C^{d-3}(\overline{\Omega}_1)} \leq C\|\Sigma-\tilde{\Sigma}\|_{(C^{d+1}(\overline{\Omega}_1))^2},
\end{equation}
for some constant $C$.
\end{thm}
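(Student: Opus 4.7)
The plan is to follow the explicit reconstruction procedure established in the proof of Theorem \ref{thm:unique} and to propagate a Lipschitz estimate through each of its steps. Write $\delta L := L-\tilde L$, $\delta\gamma := \gamma-\tilde\gamma$, $\delta E_j := E_j - \tilde E_j$ and $\delta\Sigma_j := \Sigma_j-\tilde\Sigma_j$ for $j=1,2$. The CGO solutions \eqref{eq:CGOE} with dominant profile $\check E_0 = \frac{\sqrt\gamma}{|\zeta|^2}((a\cdot\zeta)\zeta + k\zeta\wedge b)$ are used to select two sources $G_1,G_2$; for suitably chosen pairs $(a^{(j)},b^{(j)})\in\CC^3\times\CC^3$, the $3\times 2$ matrix $[\check E_0^{(1)}\ \check E_0^{(2)}]$ is uniformly of rank $2$ on $\Omega_1$. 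Since the CGO remainders are $O(|\zeta_1|^{-1})$ in $C^{d+1}$, this non-degeneracy persists for the actual fields $E_j$ and $\tilde E_j$ when $|\zeta_1|$ is sufficiently large.

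The core step is the stability of a transport equation for $\log L$. Starting from $\Sigma_j = LE_j$, substitute $E_j = \Sigma_j/L$ into the curl-curl equation \eqref{eq2.1} and eliminate $\gamma$ using the divergence condition $D\cdot(\gamma E_j)=0$; combining the two resulting equations in the way dictated by the uniqueness proof yields a transport equation of the form
\begin{equation*}
\chi(x)\cdot\nabla\log L + q(x)\log L = F(x,\Sigma,\partial\Sigma,\partial^2\Sigma;\mu,\partial\mu),
\end{equation*}
in which $\chi/|\zeta_1|$ is a small $C^{d+1}$-perturbation of the constant unit vector $\zeta_0$ on $\Omega_1$. Because $\Omega$ is convex and $\Omega_1$ has been defined by excising a neighborhood of the grazing set $\{x_0\in\partial\Omega : n(x_0)\cdot\zeta_0=0\}$, the integral curves of $\chi$ enter $\Omega_1$ through $\partial\Omega$ transversely with a uniform angle. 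Subtracting the analogous equation for $\log\tilde L$ and integrating along the characteristics from the boundary, using $L|_{\partial\Omega}=\tilde L|_{\partial\Omega}$, yields
\begin{equation*}
\|\delta L\|_{C^{d-1}(\overline\Omega_1)} \le C\|\delta\Sigma\|_{(C^{d+1}(\overline\Omega_1))^2},
\end{equation*}
the two-derivative loss being due to the two derivatives of $\Sigma$ appearing in $F$.

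Once $\delta L$ is estimated, the identity $\delta E_j = (\delta\Sigma_j - (\delta L)E_j)/\tilde L$ gives $\delta E_j$ in $C^{d-1}(\overline\Omega_1)$, and subtracting the Maxwell equations for $(\mu,\gamma)$ and $(\mu,\tilde\gamma)$ yields
\begin{equation*}
\omega^2(\delta\gamma)\tilde E_j = -D\wedge(\mu^{-1}D\wedge\delta E_j) - \omega^2\gamma\,\delta E_j,
\end{equation*}
from which $\delta\gamma$ is obtained by inverting the pointwise-nonzero combination of the two $\tilde E_j$ on $\Omega_1$; the curl-curl operator costs two further derivatives and delivers the $C^{d-3}$ estimate for $\delta\gamma$. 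Adding the two bounds yields \eqref{eq2.6}. The main obstacle is the transport-equation step: establishing Lipschitz control in $C^{d-1}(\overline\Omega_1)$ requires that the characteristic flow of $\chi$ be $C^{d-1}$-regular on $\overline\Omega_1$ with estimates uniform in the $O(|\zeta_1|^{-1})$ remainder and uniform transversality at $\partial\Omega_1\cap\partial\Omega$. Propagating these estimates through $d-1$ differentiations of both the forcing $F$ and the coefficients $\chi,q$, while preserving the smallness of the CGO remainder needed to close the non-degeneracy of the reconstruction matrix, is the delicate technical point.
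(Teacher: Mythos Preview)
Your overall architecture matches the paper's proof: transport equation for $L$ with CGO-controlled vector field $\Rightarrow$ stability for $L$ $\Rightarrow$ stability for $E=\Sigma/L$ $\Rightarrow$ stability for $\gamma$ via Maxwell. The derivative bookkeeping ($d-1$ for $L$, $d-3$ for $\gamma$) is also correct. But two points in your setup are off and would need to be fixed before the argument goes through.

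\emph{Elimination of $\gamma$ and form of the transport equation.} In the paper, $\gamma$ is not eliminated via the divergence condition $D\cdot(\gamma E_j)=0$; it drops out by taking the antisymmetric combination $F[E_1]\cdot E_2 - F[E_2]\cdot E_1$, since the terms $\omega^2\gamma E_j\cdot E_k$ are symmetric in $j,k$. The result is a \emph{homogeneous} equation $\phi\cdot\nabla L + \psi L = 0$, equivalently $\phi\cdot\nabla\log L = -\psi$, with $\phi,\psi$ given explicitly by \eqref{eq:phi}--\eqref{eq:psi} as bilinear expressions in $(\Sigma_1,\Sigma_2)$ and their first and second derivatives. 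There is no zeroth-order term $q\log L$; all the data-dependence sits in the coefficients $\phi,\psi$. This matters for the stability step, because the difference $\log L - \log\tilde L$ satisfies a transport equation whose forcing is $(\phi-\tilde\phi)\cdot\nabla\log\tilde L + (\psi-\tilde\psi)$, not the difference of two external forcings $F-\tilde F$.

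\emph{Transport stability.} Since $\phi$ and $\tilde\phi$ differ, their integral curves differ, and ``subtract and integrate along characteristics'' must be made precise as a comparison of two nearby flows, both $C^d$-close to the constant direction $i\vartheta^2\zeta_0$. The paper does not carry this out here; it invokes Proposition~\ref{thm:stab2} (proved in \cite{Chen2013}), which yields
\[
\|L-\tilde L\|_{C^{d-1}(\overline\Omega_1)} \le C\big(\|\phi-\tilde\phi\|_{C^{d}(\overline\Omega_1)} + \|\psi-\tilde\psi\|_{C^{d-1}(\overline\Omega_1)}\big),
\]
and then bounds the right-hand side by $\|\Sigma-\tilde\Sigma\|_{C^{d+1}}$ directly from the explicit formulas \eqref{eq:phi}--\eqref{eq:psi}. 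Your identification of this as ``the delicate technical point'' is accurate; the restriction to $\Omega_1$ is precisely what makes the flow estimates uniform. Finally, the rank-$2$ condition on $[\check E_0^{(1)}\ \check E_0^{(2)}]$ that you impose is not used: only $|E_j|\neq 0$ is needed, both to divide in $E_j=\Sigma_j/L$ and to solve for $\gamma$ from \eqref{eq:gamma1}.
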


We note that the constant $C$ is in the order of $\mathcal{O}(|\zeta_1|^{-1}(\|\Sigma\|_{C^{d+1}} + \|\tilde\Sigma\|_{C^{d+1}}))$, which indicates large $|\zeta_1|$ leading to better estimate.

If we have at least 6 sets of complex measurements, we can prove the
same stability results with the subset requirement removed.

\begin{thm}\label{thm:stability2}
Let $d\geq 3$. Let $\Omega$ be convex with boundary $\partial\Omega$
of class $C^d$. Let $(L,\gamma)$ and $(\tilde{L},\tilde{\gamma})$ be
two elements in $\mathcal{M}$ with $L|_{\partial\Omega} =
\tilde{L}|_{\partial\Omega}$. For $j=1,2,3$, let
$\Sigma:=(\Sigma_1^j,\Sigma_2^j)$ and
$\tilde{\Sigma}:=(\tilde{\Sigma}_1^j,\tilde{\Sigma}_2^j)$ be two sets
of internal data on $\Omega$ for coefficients $(L,\gamma)$ and
$(\tilde{L},\tilde{\gamma})$, respectively, and with boundary values
$G:=(G_1^j,G_2^j)$.

Then there is a non-empty open set of $G$ in
$(H^{d+3+\iota}(\partial\Omega))^6$, defined as a neighborhood of
the trace of CGO solutions in \eqref{eq:CGOE}, for small
$\iota>0$, such that
\begin{equation}
\label{eq2.7}	\|L-\tilde{L}\|_{C^{d-1}(\overline{\Omega})} + \|\gamma-\tilde{\gamma}\|_{C^{d-3}(\overline{\Omega})} \leq C\|\Sigma-\tilde{\Sigma}\|_{(C^{d+1}(\overline{\Omega}))^6},
\end{equation}
for some constant $C$.
\end{thm}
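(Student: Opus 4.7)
The plan is to reduce Theorem \ref{thm:stability2} to three applications of Theorem \ref{thm:stability1}, one for each of three complementary characteristic directions chosen so that the associated ``bad sets'' in $\partial\Omega$ have empty triple intersection, whence their complements cover $\overline\Omega$. First I would choose three constant unit vectors $\zeta_0^1, \zeta_0^2, \zeta_0^3 \in \RR^3$ that span $\RR^3$, for instance, a linearly independent triple. For each $j=1,2,3$, following \eqref{eq:zeta} and \eqref{eq:zeta1}, I build a pair of CGO solutions of the form \eqref{eq:CGOE} whose complex parameter $\zeta_1^j$ satisfies $\zeta_1^j/|\zeta_1^j|$ close to $\zeta_0^j$; their boundary traces supply the sources $(G_1^j,G_2^j)\in (H^{d+3+\iota}(\partial\Omega))^2$ for the $j$-th pair of internal data.

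Applying Theorem \ref{thm:stability1} to each $j$ separately, there is a nonempty open neighborhood $U^j$ of the $j$-th CGO boundary trace in $(H^{d+3+\iota}(\partial\Omega))^2$ such that
$$\|L-\tilde{L}\|_{C^{d-1}(\overline{\Omega_1^j})} + \|\gamma-\tilde{\gamma}\|_{C^{d-3}(\overline{\Omega_1^j})} \le C \|\Sigma^j-\tilde{\Sigma}^j\|_{(C^{d+1}(\overline{\Omega_1^j}))^2},$$
where $\Omega_1^j$ is obtained from $\Omega$ by removing a neighborhood of the bad set $B_j := \{x_0\in\partial\Omega : n(x_0)\cdot\zeta_0^j = 0\}$. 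The admissible boundary data in Theorem \ref{thm:stability2} is then taken in $U^1\times U^2\times U^3 \subset (H^{d+3+\iota}(\partial\Omega))^6$. The required covering $\overline\Omega=\overline{\Omega_1^1}\cup\overline{\Omega_1^2}\cup\overline{\Omega_1^3}$ follows because any $x_0\in B_1\cap B_2\cap B_3$ would have outward unit normal $n(x_0)$ orthogonal to all three $\zeta_0^j$; since these span $\RR^3$, this forces $n(x_0)=0$, a contradiction. Hence the three closed curves $B_j\subset\partial\Omega$ have empty triple intersection, and by compactness of $\partial\Omega$ the removed neighborhoods $\mathcal{N}_j:=\overline\Omega\setminus\overline{\Omega_1^j}$ can be shrunk until $\mathcal{N}_1\cap\mathcal{N}_2\cap\mathcal{N}_3=\emptyset$.

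I then combine the three local estimates using the elementary bound
$$\|f\|_{C^m(\overline\Omega)}\le\sum_{j=1}^3\|f\|_{C^m(\overline{\Omega_1^j})},$$
valid for any $f$ and $m\in\NN$ once $\overline\Omega=\bigcup_j\overline{\Omega_1^j}$, summing over $j=1,2,3$ and absorbing constants into a single $C$; this yields \eqref{eq2.7}. The main obstacle, modulo Theorem \ref{thm:stability1}, lies in the geometric covering step: one must choose the neighborhoods of the $B_j$ simultaneously (i) small enough that their triple intersection is empty and (ii) large enough that Theorem \ref{thm:stability1} applies with a uniform constant on each $\Omega_1^j$. Strict convexity of $\Omega$ and smoothness of $\partial\Omega$ make the $B_j$ smooth closed curves on $\partial\Omega$, and a standard compactness argument resolves this tradeoff.
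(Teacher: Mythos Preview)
Your reduction to three applications of Theorem~\ref{thm:stability1} plus a covering argument is sound, but it is not the paper's route. The paper does not patch subdomain estimates; it assembles the three transport equations $\phi^j\cdot\nabla L+\psi^j L=0$ into a single $3\times3$ system. Because each $\phi^j$ is uniformly close to $i\vartheta^2\zeta_0^j$ with $\{\zeta_0^j\}$ linearly independent, the matrix with rows $\phi^j$ is invertible throughout $\overline\Omega$, and $\nabla\log L$ is recovered \emph{pointwise} as an algebraic expression in the data; the bad boundary set never enters, and the global estimate follows directly (the details are deferred to Section~3.4 of \cite{Chen2013}). Your approach is more modular --- it reuses Theorem~\ref{thm:stability1} as a black box --- at the price of the extra geometric covering step, while the paper's matrix inversion is more constructive and sidesteps that step entirely. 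One correction is needed in your argument: you cannot take $\zeta_0^j\in\RR^3$, since the CGO construction forces $\zeta_0^j\cdot\zeta_0^j=0$ (it is the limit of $\zeta_1/|\zeta_1|$ with $\zeta_1\cdot\zeta_1=k^2$ and $|\zeta_1|\to\infty$; cf.\ \eqref{eq:zeta1}), so each $\zeta_0^j$ is necessarily a complex isotropic vector, exactly as the paper stipulates in Section~\ref{se:stab2}. Your linear-algebra step survives this adjustment --- if $\{\zeta_0^j\}$ are linearly independent in $\CC^3$, a real unit normal $n$ with $n\cdot\zeta_0^j=0$ for all $j$ must vanish --- so the covering argument still goes through once the vectors are taken in $\CC^3$.
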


Our proof relies on the unique solvability of a transport equation,
the vector field of which can be controlled by properly constructed
CGO solutions of Maxwell's equations.

\subsection{Construction of vector fields and uniqueness result}
\label{se:unique}

We denote the curl-curl form of the Maxwell's equation by
\begin{equation}\label{eq:curl}\notag
	F[E]:=\nabla\times( \mu^{-1}\nabla\times E) - \omega^2\gamma E = 0.
\end{equation}
We let $E_1,E_2$ be two solutions to this equation, then
\begin{eqnarray}
	0 &=& F[E_1]\cdot E_2 - F[E_2]\cdot E_1 \notag\\
	\!\!\!&=&\!\!\! \nabla \mu^{-1}\times\nabla\times E_1\cdot E_2
            - \nabla \mu^{-1}\times\nabla\times E_2\cdot E_1\\
	\!\!\!& &\!\!\! + \mu^{-1} (\nabla\times\nabla\times E_1\cdot E_2 - \nabla\times\nabla\times E_2\cdot E_1).\notag
\end{eqnarray}
We recall that $E = \Sigma/L$. By some calculations, we get
\begin{equation}
\label{eq:transport}
	\phi\cdot\nabla L + \psi L = 0,
\end{equation}
where
\begin{eqnarray}
	\label{eq:phi}\phi &=& \chi(x)[[(\nabla \mu^{-1}\cdot \Sigma_1)\Sigma_2 - (\nabla \mu^{-1}\cdot \Sigma_2)\Sigma_1] + [(\nabla \Sigma_1)\Sigma_2 - (\nabla \Sigma_2)\Sigma_1]\\
	&& [(\nabla\cdot \Sigma_1)\Sigma_2 - (\nabla\cdot \Sigma_2)\Sigma_1] - 2[(\nabla \Sigma_1)^T\Sigma_2 - (\nabla \Sigma_2)^T\Sigma_1]],\notag\\
	\label{eq:psi} \psi &=& \chi(x)[\nabla\mu^{-1}[(\nabla\times \Sigma_1)\times \Sigma_2 - (\nabla\times \Sigma_2)\times \Sigma_1] \\
	&& + [\nabla(\nabla\cdot \Sigma_1)\Sigma_2 - \nabla(\nabla\cdot \Sigma_2)\Sigma_1] - [\nabla^2 \Sigma_1\cdot \Sigma_2 - \nabla^2 \Sigma_2\cdot \Sigma_1]] \notag.
\end{eqnarray}
Here $\chi(x)$ is any smooth nonzero complex-valued function.

By the method of characteristics, the transport equation
\eqref{eq:transport} has a unique solution only if the integral curves
of $\phi$ connect every point in $\Omega$ to a point on
$\partial\Omega$. We will prove that, with properly chosen CGO
solutions, the integral curves of $\phi$ will be close to straight
lines, and thus connect every internal point to two boundary points.

We make specific choices for $\zeta, a, b$ in \eqref{eq:CGOE}:
\begin{eqnarray}
	\label{eq:zeta} \zeta_1 &=& (1/2, i\sqrt{1/h^2 + 1/4 - k^2}, 1/h),\\
	\notag \zeta_2 &=& (1/2, -i\sqrt{1/h^2 + 1/4 - k^2}, -1/h),
\end{eqnarray}
with $h > 0$ being a free parameter such that $h |\zeta_j| \approx 1$
for $j=1,2$. We note that
\begin{eqnarray}
\label{eq:zeta1}&&\lim_{h\rightarrow 0}\zeta_1/|\zeta_1|=\frac{1}{\sqrt{2}}(0,i,1)=:\zeta_0,\\
\label{eq:zeta2}&&\lim_{h\rightarrow 0}\zeta_2/|\zeta_2| = -\zeta_0
\end{eqnarray}
and
\begin{equation}\notag 
	\zeta_1+\zeta_2 = (1,0,0),\quad \zeta_0\cdot\zeta_0=0.
\end{equation}
By choosing any $a_j\in\CC^3$ and $b_1=b_2=(0,0,1/h)$, we can verify that CGO solutions $\check{E}_1$ and $\check{E}_2$ defined in \eqref{eq:CGOE} satisfy
\begin{eqnarray}
	\notag (\check{E}_0^1 + \check{E}_r^1)\cdot (\check{E}_0^2 + \check{E}_r^2) &=& \check{E}_0^1\cdot \check{E}_0^2 + \mathcal{O}(h)\\
	\notag &=& \frac{k^2}{|\zeta_1|^2|\zeta_2|^2\gamma}(\zeta_1\times b)\cdot(\zeta_2\times b) + \mathcal{O}(h)\\
	\notag &=& \frac{k^2}{|\zeta_1|^2|\zeta_2|^2\gamma} (\frac{1}{h^2}+\frac{1}{2}-k^2)\frac{1}{h^2}  + \mathcal{O}(h)\\
	\notag &=& \frac{k^2}{\gamma}\frac{1}{|\zeta_1|^2|\zeta_2|^2h^4}  + \mathcal{O}(h)
\end{eqnarray}
and
\begin{equation}
	\notag \frac{\zeta_i}{|\zeta_i|}\cdot \check{E}_j =\mathcal{O}(h),\quad \mathrm{for}\; i,j=1,2.
\end{equation}
We recall that the check sign, for example $\check{E_j}$, indicates
fields corresponding with the CGO solutions. We denote
$\check{\Sigma}_j=L\check{E}_j=\vartheta_j e^{i\zeta\cdot
  x}(\eta_j+R_j)$ with $\vartheta = \frac{Lk}{\gamma^{1/2}}$, $\eta_j
= \gamma^{1/2}\check{E}_0^j/k$ and $R=\gamma^{1/2}\check{E}_R/k$. Then
$\eta_1\cdot \eta_2 = 1 + \mathcal{O}(h)$.

We now evaluate $\check\phi$. We find that
\begin{equation}
	\notag \nabla \check{\Sigma}_j = e^{i\zeta\cdot x}[(\eta_j + R_j)(\nabla \vartheta)^T + i \vartheta(\eta_j + R_j) \zeta^T + \vartheta \nabla(\eta_j + R_j)]
\end{equation}
and
\begin{eqnarray}
	\notag &&e^{-i(\zeta_1+\zeta_2)\cdot x}\frac{h}{\vartheta}(\nabla \check{\Sigma}_1)\check{\Sigma}_2 = \nabla (\vartheta E_1)E_2\\
	\notag &=&	h[(\eta_1 + R_1)(\nabla \vartheta)^T + i \vartheta(\eta_1 + R_1) \zeta_1^T + \vartheta \nabla(\eta_1 + R_1)](\eta_2+R_2)\phantom{space}\\
	\notag &=&	h[(\eta_1+R_1)[\nabla\vartheta^T(\eta_2 + R_2) + i\vartheta\zeta_1^T (\eta_2 + R_2)] \\
	\notag && + \vartheta\nabla(\eta_1 + R_1) (\eta_2 + R_2)]\\
	\notag &=& i\vartheta h(\zeta_1\cdot \eta_2)\eta_1 + \mathcal{O}(h)\\
	\notag &\rightarrow& 0\quad \text{as}\;h \rightarrow 0.
\end{eqnarray}
Similarly, on a bounded domain,
\begin{eqnarray}
	\notag &&e^{-i(\zeta_1+\zeta_2)\cdot x}\frac{h}{\vartheta}(\nabla \check{\Sigma}_1)^T\check{\Sigma}_2 \\
	\notag &=&	h[\nabla \vartheta (\eta_1 + R_1)^T(\eta_2 + R_2) + i \vartheta \zeta_1 (\eta_1 + R_1)^T(\eta_2 + R_2)\\
	\notag && + \vartheta \nabla(\eta_1 + R_1)^T (\eta_2 + R_2)]\\
	\notag &=& i\vartheta h\zeta_1 + \mathcal{O}(h)\\
	\notag &\rightarrow& i\vartheta\zeta_0\quad\text{as}\; h \rightarrow 0.
\end{eqnarray}
Moreover,
\begin{eqnarray}
	\notag && e^{-i(\zeta_1+\zeta_2)\cdot x}\frac{h}{\vartheta}(\nabla\cdot \check{\Sigma}_1)\check{\Sigma}_2 \\
	\notag &=& h[(\nabla\vartheta\cdot(\eta_1 + R_1))(\eta_2 + R_2) + i\vartheta(\zeta_1\cdot(\eta_1 + R_1))(\eta_2 + R_2) \\
	\notag && + \vartheta\nabla\cdot(\eta_1 + R_1)(\eta_2 + R_2)]\\
	\notag &=& i\vartheta h(\zeta_1\cdot \eta_1)\eta_2 + \mathcal{O}(h)\\
	\notag &\rightarrow& 0\quad \text{as}\;h \rightarrow 0.
\end{eqnarray}
We also find that
\begin{eqnarray}
	\notag && e^{-i(\zeta_1+\zeta_2)\cdot x}\frac{h}{\vartheta} (\nabla\frac{1}{\mu}\cdot \check{\Sigma}_1)\check{\Sigma}_2 \\
	\notag &=& \vartheta h(\nabla\frac{1}{\mu}\cdot \eta_1)\eta_2 + \mathcal{O}(h)\\
	\notag &\rightarrow& 0\quad \text{as}\;h \rightarrow 0.
\end{eqnarray}

We choose $\chi(x) = -\frac{h}{4}e^{-i(\zeta_1+\zeta_2)\cdot x}$; then
\begin{equation}
\label{eq:phi2}
	\|\check{\phi} - i\vartheta^2 \zeta_0\|_{H^{s-1}_\delta(\RR^3)} = \mathcal{O}(h).
\end{equation}
We let $s=\frac{3}{2}+d+2+\iota$ for $d>0$ and small $\iota>0$. The Sobolev embedding theorem implies that
\begin{equation}
\label{eq:phi5}
	\|\check{\phi} - i\vartheta^2 \zeta_0\|_{C^{d+1}_\delta(\RR^3)} = \mathcal{O}(h).
\end{equation}
If we choose $b_1=b_2=(0,0,h^{\varepsilon-1})$ and $\chi(x) =
-\frac{h^{1-2\varepsilon}}{4}e^{-i(\zeta_1+\zeta_2)\cdot x}$  with
$0<\varepsilon<1/2$, the same calculations as above give
\begin{equation}
\label{eq:phi3}
	\|\check{\phi} - i\vartheta^2 \zeta_0\|_{C^{d+1}_\delta(\RR^3)} = \mathcal{O}(h^{1-2\varepsilon}).
\end{equation}

So far we proved that if the electric fields are exactly given by the
CGO solutions as we constructed, the vector field $\check\phi$ in
\eqref{eq:transport} can be approximated by \eqref{eq:phi2}, which
implies that every integral curve of $\check\phi$ is almost a straight
line and thus connects every internal point to two boundary
points. Next, we prove that we can perturb the CGO
solutions so that an estimate similar to \eqref{eq:phi2} still
holds. To follow the dependencies of vector fields on boundary conditions,
we introduce a regularity theorem for Maxwell's equations. Let $tE$ be
the tangential boundary condition of $E$. The following function
spaces were introduced in \cite{Kenig2011}:
\begin{equation}
	\notag H^l_{\mathrm{Div}}(\Omega) = \{u\in H^l\Omega^1(\Omega): \mathrm{Div}(tu)\in H^{l-1/2}(\partial\Omega)\},
\end{equation}
\begin{equation}
	\notag TH^l_{\mathrm{Div}}(\partial\Omega) = \{g\in H^l\Omega^1(\partial\Omega):\mathrm{Div}(g)\in H^l(\partial\Omega)\},
\end{equation}
for $l>0$, where $H^l\Omega^1(\Omega)$ is a space of vector functions
each component of which is in $H^l(\Omega)$. These are Hilbert spaces
with norms
\begin{equation}
	\notag \|u\|_{H^l_{\mathrm{Div}}(\Omega)} = \|u\|_{H^l(\Omega)} + \|\mathrm{Div}(tu)\|_{H^{l-1/2}(\partial\Omega)},
\end{equation}
\begin{equation}
	\notag \|g\|_{TH^l_{\mathrm{Div}}(\partial\Omega)} = \|g\|_{H^l(\partial\Omega)} + \|\mathrm{Div}(g)\|_{ H^l(\partial\Omega)}.
\end{equation}
It is clear that $t(H^l_{\mathrm{Div}}(\Omega)) = TH^{l-1/2}_{\mathrm{Div}}(\partial\Omega)$. We also observe that 
\begin{equation}\label{eq:reg2}
	\|E\|_{H^{l}(\Omega)}\leq \|E\|_{H^{l}_{Div}(\Omega)} 
	\;\mathrm{and}\; \|G\|_{TH^{l}_{Div}(\partial\Omega)}\leq \|G\|_{H^{l+1}(\partial\Omega)}
\end{equation}

\begin{prop}[\cite{Kenig2011}]\label{thm:regularity}
Let $\epsilon,\mu\in C^l$, $l>2$, be positive functions. There is a
discrete subset $\Sigma\subset\CC$, such that if $\omega\notin\Sigma$,
then one has a solution $E\in H^l_{\mathrm{Div}}$ to \eqref{eq3.1}
given any tangential boundary condition $G\in
TH^{l-1/2}_{\mathrm{Div}}(\partial\Omega)$. The solution satisfies
\begin{equation}
\label{eq:reg}
	\|E\|_{H^l_{\mathrm{Div}}(\Omega)}\leq C\|G\|_{H^{l-1/2}_{\mathrm{Div}}}
\end{equation}
with $C$ independent of $G$.
\end{prop}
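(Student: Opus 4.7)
The plan is to reduce the first-order Maxwell system to a second-order elliptic boundary value problem for $E$ alone and then invoke standard elliptic regularity together with analytic Fredholm theory; the detailed argument is carried out in \cite{Kenig2011}, but I sketch the strategy here. First, I would eliminate $H$ via $H = (\omega\mu)^{-1}D\wedge E$ to obtain $\nabla\times(\mu^{-1}\nabla\times E) - \omega^2\epsilon E = 0$. Since the principal symbol of $\nabla\times\mu^{-1}\nabla\times$ contains gradients in its kernel, this operator is not elliptic, so I would pass to the augmented operator
\begin{equation}\notag
\mathcal{L}E := \nabla\times(\mu^{-1}\nabla\times E) - \nabla\bigl(\epsilon^{-1}\nabla\cdot(\epsilon E)\bigr) - \omega^2\epsilon E,
\end{equation}
whose principal symbol acts as $-|\xi|^2$ on each component. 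Taking the divergence of the Maxwell system forces $\nabla\cdot(\epsilon E)=0$, so Maxwell solutions coincide exactly with divergence-free solutions of $\mathcal{L}E=0$.

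Next I would set up the boundary conditions for the augmented system. The tangential trace $tE = G$ supplies two scalar conditions, while the surface divergence $\mathrm{Div}(G) = \mathrm{Div}(tE)$ controls a normal-derivative component of $E$ through the Stokes-type identity $\mathrm{Div}(tE) = n\cdot(\nabla\times E)$ modulo curvature terms on $\partial\Omega$. The third scalar condition arises from the gauge $\nabla\cdot(\epsilon E)=0$, which on the boundary provides a relation for the normal trace of $E$. I would then verify the Lopatinski--Shapiro complementing condition for this coupled tangential/surface-divergence boundary operator directly from the principal symbol.

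With the elliptic boundary value problem in place, Agmon--Douglis--Nirenberg regularity for systems with $C^l$ coefficients yields the a priori estimate
\begin{equation}\notag
\|E\|_{H^l_{\mathrm{Div}}(\Omega)} \leq C\bigl(\|G\|_{TH^{l-1/2}_{\mathrm{Div}}(\partial\Omega)} + \|E\|_{L^2(\Omega)}\bigr),
\end{equation}
which makes the boundary-to-solution map Fredholm. It depends analytically on $\omega$ and is invertible at some $\omega\in\CC$ (for instance for sufficiently large $|\mathrm{Im}\,\omega|$), so by the analytic Fredholm theorem the inverse exists outside a discrete set $\Sigma\subset\CC$. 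For $\omega\notin\Sigma$ the low-order $L^2$ term in the a priori estimate is absorbed by a standard compactness argument, giving \eqref{eq:reg} with $C$ independent of $G$. The main obstacle I expect is the verification of the complementing condition for the mixed tangential/surface-divergence boundary data, together with the equivalence between divergence-free solutions of the augmented problem and genuine solutions of Maxwell's system; the remainder is standard elliptic regularity and Fredholm machinery.
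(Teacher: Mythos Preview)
The paper does not give its own proof of this proposition: it is quoted directly from \cite{Kenig2011} and used as a black box, so there is nothing in the present paper to compare your argument against. Your sketch is, in broad strokes, the strategy employed in \cite{Kenig2011}: one augments the curl--curl operator by a $\nabla(\epsilon^{-1}\nabla\cdot(\epsilon\,\cdot\,))$ term to obtain a genuinely elliptic system, imposes the tangential trace together with a scalar boundary condition coming from the divergence constraint, checks the complementing condition, and then applies elliptic regularity plus analytic Fredholm theory in $\omega$.

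Two small points of caution if you want your sketch to stand on its own. First, the third boundary condition is not really ``the gauge $\nabla\cdot(\epsilon E)=0$ restricted to $\partial\Omega$'' deduced from Maxwell; rather one \emph{imposes} $\epsilon^{-1}\nabla\cdot(\epsilon E)|_{\partial\Omega}=0$ as an independent boundary condition for the augmented elliptic problem and then proves a posteriori (via a scalar elliptic equation satisfied by $\epsilon^{-1}\nabla\cdot(\epsilon E)$ with zero Dirichlet data) that the divergence vanishes throughout $\Omega$, so that the augmented solution is a genuine Maxwell solution. Second, the identity you invoke for $\mathrm{Div}(tE)$ is exactly $\mathrm{Div}(tE)=-\nu\cdot(\nabla\times E)$ on $\partial\Omega$ without curvature corrections; this is what ties the $TH^{l-1/2}_{\mathrm{Div}}$ norm of $G$ to the boundary regularity needed for the elliptic estimate. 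With these adjustments your outline matches the cited reference.
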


We now estimate vector field $\phi$ in \eqref{eq:transport} when the
boundary value of the electric field is given by a perturbation of the
trace of a CGO solution.

\begin{prop}\label{thm:phi}
Under the assumptions of Proposition \ref{thm:regularity}, when $G_j$ is in a neighborhood of $\check{G}_j=t\check{E}_j\in H^{d+3+\iota}(\partial\Omega)$, $j=1,2$, the corresponding vector field $\phi$ defined in \eqref{eq:phi} satisfies
\begin{equation}
\label{eq:phi4}
	\|\phi - i\vartheta^2\zeta_0\|_{C^d(\overline{\Omega})}=\mathcal{O}(h)
\end{equation}
for small $h$.
\end{prop}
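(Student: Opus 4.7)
The plan is to split
\[
\phi - i\vartheta^2 \zeta_0 = (\phi - \check\phi) + (\check\phi - i\vartheta^2 \zeta_0),
\]
observing that \eqref{eq:phi5} already controls the second summand in $C^{d+1}_\delta(\RR^3)$, hence in $C^d(\overline\Omega)$, by $\mathcal{O}(h)$. It therefore suffices to prove that $\|\phi - \check\phi\|_{C^d(\overline\Omega)} = \mathcal{O}(h)$ provided the neighborhood of $(\check G_1,\check G_2)$ in $(H^{d+3+\iota}(\partial\Omega))^2$ is taken of radius $\mathcal{O}(h)$.

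Since $\mu,\gamma$ are fixed, linearity of \eqref{eq3.1} makes $E_j - \check E_j$ a solution of Maxwell's system with tangential datum $G_j - \check G_j$. The hypothesis $\mu,\gamma \in H^{3/2+d+4+\iota}(\Omega)\hookrightarrow C^{d+4}(\overline\Omega)$ is more than enough to invoke Proposition \ref{thm:regularity} at level $l = d + 5/2 + \iota$. Combined with the second inclusion in \eqref{eq:reg2}, this gives
\[
\|E_j - \check E_j\|_{H^{d+5/2+\iota}(\Omega)} \leq C\,\|G_j - \check G_j\|_{H^{d+3+\iota}(\partial\Omega)}.
\]
Because $d + 5/2 + \iota > (d+1) + 3/2$ whenever $\iota > 0$, Sobolev embedding in three dimensions upgrades this to $\|E_j - \check E_j\|_{C^{d+1}(\overline\Omega)} = \mathcal{O}(h)$, and multiplication by $L \in C^{d+1}(\overline\Omega)$ transfers the bound to $\|\Sigma_j - \check\Sigma_j\|_{C^{d+1}(\overline\Omega)} = \mathcal{O}(h)$.

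Inspecting \eqref{eq:phi}, the vector field $\phi$ is a polynomial of bidegree $(1,1)$ in $(\Sigma_1,\nabla\Sigma_1)$ and $(\Sigma_2,\nabla\Sigma_2)$, with coefficients $\nabla\mu^{-1} \in C^{d+3}(\overline\Omega)$ and the smooth weight $\chi(x)$. Since $C^d(\overline\Omega)$ is a Banach algebra, this multilinear structure yields the locally Lipschitz bound
\[
\|\phi - \check\phi\|_{C^d(\overline\Omega)} \leq C\bigl(1 + \|\check\Sigma_1\|_{C^{d+1}} + \|\check\Sigma_2\|_{C^{d+1}}\bigr) \sum_{j=1,2}\|\Sigma_j - \check\Sigma_j\|_{C^{d+1}(\overline\Omega)} = \mathcal{O}(h),
\]
which combined with \eqref{eq:phi5} produces \eqref{eq:phi4}.

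The main obstacle is purely bookkeeping: the Sobolev exponents chosen in \eqref{eq2.5} and in the statement of Proposition \ref{thm:phi} are tuned so that the chain (regularity theorem $\to$ Sobolev embedding $\to$ Banach-algebra product rule) closes at exactly $C^d(\overline\Omega)$ with only $\iota > 0$ of slack. Concretely, $\phi$ depending on $\nabla\Sigma_j$ forces $C^{d+1}$ control on $\Sigma_j$, which via the three-dimensional embedding $H^{d+5/2+\iota}(\Omega) \hookrightarrow C^{d+1}(\overline\Omega)$ forces $E_j \in H^{d+5/2+\iota}(\Omega)$, which via Proposition \ref{thm:regularity} and \eqref{eq:reg2} finally forces the stated boundary regularity $H^{d+3+\iota}(\partial\Omega)$ on $G_j$.
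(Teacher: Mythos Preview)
Your proof follows essentially the same route as the paper's: split $\phi - i\vartheta^2\zeta_0$ through $\check\phi$, invoke Proposition~\ref{thm:regularity} together with \eqref{eq:reg2} and Sobolev embedding to control $E_j - \check E_j$ in $C^{d+1}(\overline\Omega)$, pass to $\Sigma_j - \check\Sigma_j$, and then use that $C^d(\overline\Omega)$ is a Banach algebra on the bilinear expression \eqref{eq:phi}.

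The one substantive difference is where the factor of $h$ comes from in the bound on $\phi-\check\phi$. You obtain it by shrinking the neighborhood of $(\check G_1,\check G_2)$ to radius $\mathcal{O}(h)$. The paper instead keeps a fixed radius $\varepsilon$ and extracts the $h$ from the weight $\chi(x)=-\tfrac{h}{4}e^{-i(\zeta_1+\zeta_2)\cdot x}$ (note $\zeta_1+\zeta_2=(1,0,0)$ is real, so $\|\chi\|_{C^d(\overline\Omega)}=\mathcal{O}(h)$), arriving at $\|\phi-\check\phi\|_{C^d}\leq C h\varepsilon$. This is a bit sharper, since it delivers \eqref{eq:phi4} on an $h$-independent open set of boundary data, which is what the statements of Theorems~\ref{thm:unique}--\ref{thm:stability2} actually require; you should incorporate the $\|\chi\|_{C^d}=\mathcal{O}(h)$ observation rather than letting the neighborhood collapse with $h$.
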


Let $s=\frac{3}{2}+d+2+\iota$, for small $\iota>0$. Then this proposition 
follows from Proposition 3.6 in \cite{Chen2013} directly. To be self-contained in this
section, we summarize the proof here.

\begin{proof}
According to the Sobolev embedding theorem, proposition \ref{thm:regularity} and eq. \eqref{eq:reg2}, we have that
\begin{equation}\notag 
	\begin{array}{rl} \vspace{1ex}
	&\|E\|_{C^{d+1}(\overline{\Omega})}\leq C\|E\|_{H^{\frac{5}{2}+d+\iota}(\Omega)}\leq C\|E\|_{H^{\frac{5}{2}+d+\iota}_{Div}(\Omega)}\\
	\leq& C\|G\|_{TH^{d+2+\iota}_{Div}(\partial\Omega)}\leq C\|G\|_{H^{d+3+\iota}(\partial\Omega)},
	\end{array}
\end{equation}
where various constants are all named ``C''. Hence
\begin{equation}
\label{eq:regularity1}
  \|E\|_{C^{d+1}(\overline{\Omega})} \leq C\|G\|_{H^{d+3+\iota}(\partial\Omega)}.
\end{equation}
Let us now define boundary conditions $G_j\in H^{d+3+\iota}(\partial\Omega)$, $j=1,2$, such that
\begin{equation}
\label{eq:regularity2}
  \|G_j - t\check E_j\|_{H^{d+3+\iota}(\partial\Omega)} \leq \varepsilon,
\end{equation}
for some $\varepsilon > 0$ sufficiently small. let $E_j$ be the solution to the Maxwell equations \eqref{eq2.1} with $tE_j = G_j$. By \eqref{eq:regularity1}, we thus have
\begin{equation}
\label{eq:regularity3}
  \|E _j- \check E_j\|_{C^{d+1}(\overline{\Omega})} \leq C\varepsilon,
\end{equation}
for some positive constant $C$. We introduce the complex-valued internal
data, $\Sigma_j=LE_j$ and conclude that
\begin{equation}
\label{eq:regularity4}
  \|\Sigma_j - \check \Sigma_j\|_{C^{d+1}(\overline{\Omega})} \leq C\varepsilon.
\end{equation}
We obtain the estimate (cf. \eqref{eq:phi})
\begin{equation}
\label{eq:regularity5}
	\|\phi-\check{\phi}\|_{C^d(\overline\Omega)} \leq C\|\chi(x)\|_{C^d(\overline{\Omega})}\|\Sigma_j - \check \Sigma_j\|_{C^{d+1}(\overline{\Omega})} \leq C h\varepsilon,
\end{equation}
where $\chi(x)$ is defined above \eqref{eq:phi2}. Therefore,
\eqref{eq:phi4} follows from \eqref{eq:phi5} and
\eqref{eq:regularity5}. This completes the proof.
\end{proof}

We recall that $\mathcal{M}$ is the parameter space of $(L,\gamma)$
defined in \eqref{eq2.5}. We now prove Theorem \ref{thm:unique}.

\begin{proof}[Proof of Theorem \ref{thm:unique}]
Let $d\geq 3$. By proposition \ref{thm:phi}, we choose the set of
boundary conditions for electrical fields to be a neighborhood of
$(\check{G}_j) = (t\check{E}_j)$ in
$H^{d+3+\iota}(\partial\Omega))^2$. By assuming that the measurements coincide, that is,
$\Sigma=\tilde{\Sigma}$, we have that $\phi=\tilde{\phi}$ and $\psi =
\tilde{\psi}$ by \eqref{eq:phi}-\eqref{eq:psi}. Thus, $L$ and
$\tilde{L}$ solve the same transport equation \eqref{eq:transport}
while $L=\tilde{L} = \Sigma/G$ on $\partial\Omega$. As $\phi$
satisfies \eqref{eq:phi4}, we deduce that $L=\tilde{L}$ since integral
curves of $\phi$ map any $x\in\Omega$ to two boundary points. More
precisely, consider the flow $\theta_x(t)$ associated with the imaginary
part of $\phi$, that is, $\theta_x(t)$ is the solution to
\begin{equation}
\label{eq:beta6}\notag 
	\dot{\theta}_x(t) = \Im\phi(\theta_x(t)), \quad \theta(0) = x\in\bar{\Omega}.
\end{equation}
By the Picard-Lindel\"of theorem, \eqref{eq:beta6} admits a unique solution since $\phi$ is of class $C^1(\Omega)$. Also by \eqref{eq:phi4}, for $\forall x\in\Omega$, there exist $x_\pm(x)\in\partial\Omega$ and $t_\pm(x)>0$ such that
\begin{equation}
\label{eq:beta7}\notag 
	\theta_x(t_\pm(x)) = x_\pm(x) \in\partial\Omega.
\end{equation}
By the method of characteristics, the solution $L$ to the transport equation \eqref{eq:transport} is given by
\begin{equation}
\label{eq:beta8}
	L(x) = L_0(x_\pm(x))e^{-\int_0^{t_\pm(x)}\Im\gamma(\theta_x(s))ds} ,
\end{equation}
where $L_{0}:=L|_{\partial\Omega}$ is the restriction of $L$ on the
boundary. The solution $\tilde{L}$ is given by the same formula since
$\theta_x(t) = \tilde{\theta}_x(t)$. This implies that $L=\tilde{L}$ and
thus $E_j = \tilde{E}_j = \Sigma_j/L = \tilde{\Sigma}_j/\tilde{L}$,
$j=1,2$. By the choice of illuminations, we also have $|E_j|\neq 0$
due to \eqref{eq:regularity3} and $|\check{E}_j|\neq 0$. Finally, by
substituting $E_j$ and $\tilde{E}_j$ into \eqref{eq2.1}, we can solve
for $\gamma$ and $\tilde{\gamma}$ by
\begin{equation}
\label{eq:gamma1}
	\gamma=-\displaystyle\frac{(\Sigma\wedge\frac{1}{\mu}\Sigma\wedge E)\cdot\bar{E}}{\omega^2|E|^{2}}\quad\mathrm{and}\quad \tilde{\gamma}=-\displaystyle\frac{(\Sigma\wedge\frac{1}{\mu}\Sigma\wedge \tilde{E})\cdot\bar{\tilde{E}}}{\omega^2|\tilde{E}|^{2}}.
\end{equation}
We then conclude that $\gamma=\tilde{\gamma}$ in $C^{d-3}(\Omega)$ for $d\geq 3$.
\end{proof}


\subsection{Stability result}\label{se:stab}

The proof of the stability theorem follows the argument of
\cite{Chen2013} with the estimate of the vector field replaced by
\eqref{eq:phi4}.

\begin{prop}\label{thm:stab2}
Let $d\geq 1$. Let $L$ and $\tilde{L}$ be solutions to
\eqref{eq:transport} corresponding to coefficients $(\phi,\psi)$ and
$(\tilde{\phi}, \tilde{\psi})$, respectively, where \eqref{eq:phi4}
holds for both $\phi$ and $\tilde{\phi}$.

Let $L_0 = L|_{\partial\Omega}$, $\tilde{L}_0 =
\tilde{L}|_{\partial\Omega}$, and $L_0,\tilde{L}_0 \in
C^{d-1}(\partial\Omega)$. We also assume that $h$ is sufficiently small and that
$\Omega$ is convex. Then there is a constant $C$ such that upon restricting
to $\Omega_1$,
\begin{align}
\label{eq:stable1}
\|L-\tilde{L}\|_{C^{d-1}(\overline{\Omega}_1)} \leq & C\|L_0\|_{C^{d-1}(\partial\Omega_1)}[\|\phi-\tilde{\phi}\|_{C^{d}(\overline{\Omega}_1)} \\
& + \|\psi-\tilde{\psi}\|_{C^{d-1}(\overline{\Omega}_1)}] + C\|L_0-\tilde{L}_0\|_{C^{d-1}(\partial\Omega_1)}. \notag
\end{align}
\end{prop}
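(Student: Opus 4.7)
The plan is to subtract the two transport equations to obtain an inhomogeneous transport equation for $\beta := L - \tilde L$, and then solve it by the method of characteristics along the real flow of $\Im\phi$, tracking $C^{d-1}$ regularity up the chain of compositions.

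Subtracting $\phi\cdot\nabla L + \psi L = 0$ from $\tilde\phi\cdot\nabla\tilde L + \tilde\psi\tilde L = 0$ yields
\[
\phi\cdot\nabla\beta + \psi\beta = R, \qquad R := -(\phi-\tilde\phi)\cdot\nabla\tilde L - (\psi-\tilde\psi)\tilde L,
\]
with boundary data $\beta|_{\partial\Omega} = L_0 - \tilde L_0$. Applying the characteristic representation \eqref{eq:beta8} to $\tilde L$, together with the triangle inequality used to control $\tilde L_0$ by $L_0$ and $L_0-\tilde L_0$, gives $\|\tilde L\|_{C^{d}(\overline\Omega_1)} \leq C\|L_0\|_{C^{d-1}(\partial\Omega_1)}$ (plus a term of the form $C\|L_0-\tilde L_0\|_{C^{d-1}(\partial\Omega_1)}$, already present in the target bound). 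Hence
\[
\|R\|_{C^{d-1}(\overline\Omega_1)} \leq C\|L_0\|_{C^{d-1}(\partial\Omega_1)} \bigl[\|\phi-\tilde\phi\|_{C^{d}(\overline\Omega_1)} + \|\psi-\tilde\psi\|_{C^{d-1}(\overline\Omega_1)}\bigr].
\]

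As in the proof of Theorem \ref{thm:unique}, I introduce the real flow $\dot\theta_x(t) = \Im\phi(\theta_x(t))$, $\theta_x(0)=x$. Hypothesis \eqref{eq:phi4} says $\phi$ is an $\mathcal{O}(h)$ perturbation of the nowhere vanishing vector field $i\vartheta^2\zeta_0$, so on the convex domain $\Omega$ with the tangency neighborhood $\{n\cdot\zeta_0\approx 0\}$ excised to form $\Omega_1$, every integral curve exits $\partial\Omega$ transversally in uniformly bounded time $t_\pm(x)$, and by standard ODE-with-parameters regularity the maps $x\mapsto t_\pm(x), x_\pm(x)$ and $(x,t)\mapsto \theta_x(t)$ are $C^{d-1}$ with bounds independent of $h$. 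Along these characteristics the transport equation for $\beta$ reduces to a scalar inhomogeneous linear ODE (the $\Re\phi = \mathcal{O}(h)$ contribution being absorbed into the driving term for $h$ small), which integrates to
\[
\beta(x) = (L_0-\tilde L_0)(x_\pm(x))\, e^{-\int_0^{t_\pm(x)}\psi(\theta_x(s))\,ds} + \int_0^{t_\pm(x)} R(\theta_x(s))\, e^{-\int_s^{t_\pm(x)}\psi(\theta_x(\tau))\,d\tau}\,ds.
\]

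Finally, I estimate this representation in $C^{d-1}(\overline\Omega_1)$. Differentiation up to order $d-1$ in $x$ produces compositions involving $\partial_x^\alpha\theta_x(t)$, $\partial_x^\alpha t_\pm(x)$, and derivatives of $R$, $L_0-\tilde L_0$, and $\psi$; all flow quantities are uniformly $C^{d-1}$-bounded on $\overline\Omega_1$, so repeated product and chain rules yield a bound of the claimed form once the estimate for $\|R\|_{C^{d-1}}$ is inserted. The main obstacle is the bookkeeping for the exit-time map: one must confirm that after excising the tangency set from $\partial\Omega$ the maps $x\mapsto t_\pm(x), x_\pm(x)$ are genuinely $C^{d-1}$ with $h$-independent bounds, which is exactly where the convexity of $\Omega$ and the definition of $\Omega_1$ enter, guaranteeing that each characteristic meets $\partial\Omega$ in exactly two points, both transversally.
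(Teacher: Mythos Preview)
The paper does not actually prove this proposition; it states that ``Readers are referred to \cite{Chen2013} for the details.'' Your overall strategy---subtracting the two transport equations to obtain an inhomogeneous equation for $\beta=L-\tilde L$ and then integrating along the real characteristics---is indeed the standard one and is what the cited reference carries out. However, two of your intermediate claims do not hold as written.

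First, the assertion that the characteristic representation \eqref{eq:beta8} yields $\|\tilde L\|_{C^{d}(\overline\Omega_1)}\le C\|L_0\|_{C^{d-1}(\partial\Omega_1)}$ is false: a first-order transport equation does not improve regularity, and composing $C^{d-1}$ boundary data with a $C^{d}$ exit map produces at most a $C^{d-1}$ function. Consequently your bound $\|R\|_{C^{d-1}}\lesssim\|L_0\|_{C^{d-1}}[\cdots]$ is not justified, because the term $(\phi-\tilde\phi)\cdot\nabla\tilde L$ demands $\nabla\tilde L\in C^{d-1}$, i.e.\ $\tilde L\in C^{d}$. The presence of $\|\phi-\tilde\phi\|_{C^{d}}$ (one derivative more than $\psi-\tilde\psi$) in the conclusion of the proposition is precisely the trace of this difficulty; the way \cite{Chen2013} handles it is by differentiating the transport equation and propagating the resulting system of equations for $\partial^{\alpha}L$ along the same characteristics, so that the derivative landing on $\tilde L$ is traded for a derivative on $\phi-\tilde\phi$.

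Second, the parenthetical ``the $\Re\phi=\mathcal O(h)$ contribution being absorbed'' is incorrect: since $\zeta_0=\tfrac{1}{\sqrt 2}(0,i,1)$ and $\vartheta^{2}$ is complex, the leading part $i\vartheta^{2}\zeta_0$ of $\phi$ has both real and imaginary components of order one. The correct reduction is to recall that $L$ and $\tilde L$ are real-valued, take the imaginary part of \eqref{eq:transport}, and work with the genuine real transport equation $\Im\phi\cdot\nabla\beta+\Im\psi\,\beta=\Im R$, exactly as the paper does in the uniqueness argument preceding \eqref{eq:beta8}. With these two corrections your outline becomes the argument of \cite{Chen2013}.
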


The choice of $\Omega_1$ depends on the proof of the above
proposition. Readers are referred to \cite{Chen2013} for the
details. Now we can prove the main stability theorem.

\begin{proof}[Proof of Theorem \ref{thm:stability1}]
From \eqref{eq:phi} and \eqref{eq:psi}, it is straightforward to check that
\begin{eqnarray}
	\notag \|\phi-\tilde{\phi}\|_{C^{d}(\overline{\Omega}_1)}&\leq& C \|\Sigma-\tilde{\Sigma}\|_{C^{d+1}(\overline{\Omega}_1)}\;\mathrm{and}\;\\
	\notag \|\psi-\tilde{\psi}\|_{C^{d-1}(\overline{\Omega}_1)}&\leq& C \|\Sigma-\tilde{\Sigma}\|_{C^{d+1}(\overline{\Omega}_1)},
\end{eqnarray}
where $C>0$ is a positive constant. The first part of \eqref{eq2.6}
then follows directly from Proposition \ref{thm:stab2}. To estimate
the difference between $\gamma$ and $\tilde{\gamma}$, we notice that
$$E-\tilde{E}=\displaystyle\frac{\Sigma}{L}-\displaystyle\frac{\tilde{\Sigma}}{\tilde{L}}=\displaystyle\frac{L(\Sigma-\tilde{\Sigma})-\Sigma(L-\tilde{L})}{L\tilde{L}}.$$
Since $L$ and $\tilde{L}$ are non-vanishing, by the stability result
for $L$ we obtain
\begin{equation}\label{Estability}
\|E-\tilde{E}\|_{C^{d-1}(\overline{\Omega}_{1})}\leq C\|\Sigma-\tilde{\Sigma}\|_{C^{d+1}(\overline{\Omega}_{1})}.
\end{equation}
By choosing the boundary values close to the boundary conditions of CGO solutions, \eqref{eq:regularity2} and \eqref{eq:regularity3} imply that $E_j$ is non-vanishing since the CGO solutions are non-vanishing. We recall that $\gamma$ and $\tilde{\gamma}$ are computed by \eqref{eq:gamma1}. By taking the difference and using \eqref{Estability} we derive
$$
	\|\gamma-\tilde{\gamma}\|_{C^{d-3}(\overline{\Omega}_{1})}\leq C\|\Sigma-\tilde{\Sigma}\|_{C^{d+1}(\overline{\Omega}_{1})}.
$$
This completes the proof.
\end{proof}

The required regularity of the different variables is summarized in
Table \ref{tab}, where $s=\frac{3}{2}+d+2+\iota$. 

\begin{table}[bth]
	\caption{Regularity of fields and coefficients}
	\label{tab}
	\centering
	\begin{tabular}{lll}
		\hline\hline &&\\[-8pt]
		${\mu},{\gamma}$ & $H^{s+2}(\RR^3)$ & Required regularity for the construction of CGO's\\
		${Q_{\mu,\gamma}},Z$ & $H^s(\RR^3)$& $s\geq \frac{3}{2}$ and $Q_{\mu,\gamma}$ is compactly supported \\
		$Y,X,\check{E},\check{\Sigma}$ & $H^{s-1}(\RR^3)$ & Constructed regularity\\
		$G$ & $H^{s-\frac{1}{2}}(\partial\Omega)$ & Required regularity for boundary source\\
		$L,\Sigma$ & $H^{s-1}(\overline{\Omega})$ & Required regularity for inversion, $H^{s-1}(\overline{\Omega})\subset C^{d+1}(\overline{\Omega})$\\
		$\phi$ & $C^d(\overline{\Omega})$ & \\
		$\psi,L,E$ & $C^{d-1}(\overline{\Omega})$ & Recovered regularity, $d\geq 1$\\
		$\gamma$ & $C^{d-3}(\overline{\Omega})$ & Recovered regularity, $d\geq 3$\\
		\hline\hline
	\end{tabular}	
\end{table}

\subsection{Stability with 6 complex internal data}\label{se:stab2}

We now follow Section 3.4 of \cite{Chen2013} to prove Theorem \ref{thm:stability2}.
If we take more internal measurements, we can rewrite
\eqref{eq:transport} into matrix form. We first construct proper CGO
solutions. Let $j=1,2,3$ in this section. We can choose unit vectors
$\zeta_0^j$, such that $\zeta_0^j\cdot \zeta_0^j = 0$ and
$\{\zeta_0^j\}$ are linearly independent. We next choose $(\zeta_1^j,
\zeta_2^j)$ such that $|\zeta|:=|\zeta_1^j|=|\zeta_2^j|$ and
\begin{align}\label{eq:stable4}\notag 
\lim_{|\zeta|\rightarrow\infty} \frac{\zeta_1^j}{|\zeta|} = \lim_{|\zeta|\rightarrow\infty} \frac{\zeta_2^j}{|\zeta|} = \zeta_0^j.
\end{align}
$(a_1^j,a_2^j)$ and $(b_1^j,b_2)$ are chosen such that $a_1^j=a_2^j$, $b_1^j=b_2^j$,
\begin{equation}
	\notag a_1^j\cdot\zeta_0^j=0\quad\mathrm{and}\quad 0<|\zeta_1^j\times b_1^j| =\mathcal{O}(|\zeta_1|^2).
\end{equation}

We construct CGO solutions $\check{E}_1^j, \check{E}_2^j$ corresponding to $(\zeta_1^j,a_1^j,b_1^j)$ and $(\zeta_2^j, a_2^j,b_2^j)$, for $j=1,2,3$. Let the boundary illuminations $G_1^j, G_2^j$ be chosen according to \eqref{eq:regularity2}. The measured internal data are then given by $D_1^j, D_2^j$. Proposition \ref{thm:regularity} shows that the vector field defined by \eqref{eq:phi} satisfies 

\begin{equation}\label{eq:stable5}\notag 
	\|\phi^j - i\vartheta^2\zeta_0^j\|_{C^d(\overline{\Omega})} \leq \frac{C}{|\zeta|}.
\end{equation}

The rest of the proof of Theorem \ref{thm:stability2} then follows the argument in Section 3.4 of \cite{Chen2013}.

\section{The temporal behavior of CGO solutions}\label{se:time}

Here, we characterize the internal data following from the CGO
solutions introduced above, in particular, in the small $h$ limit. We
find that their frequency behavior is approximately Gaussian.

Let $\zeta_1$ and $\zeta_2$ be defined in \eqref{eq:zeta}. We notice
that, for small $h$,
\begin{equation}
	\label{eq16} \sqrt{\frac{1}{h^2}+\frac{1}{4}-k^2} = \sqrt{\frac{1}{h^2}+\frac{1}{4}} -\frac{hk^2}{2\sqrt{1+\frac{h^2}{4}}} + \mathcal{O}(h^3k^4).
\end{equation}
We let
\begin{eqnarray}
	\notag\tilde{\zeta}_1 &=& \left(\frac{1}{2}, \frac{1}{h}, \mathrm{i}\sqrt{\frac{1}{h^2}+\frac{1}{4}}\right)\\
	\notag\tilde{\zeta}_2 &=& \left(\frac{1}{2}, -\frac{1}{h}, -\mathrm{i}\sqrt{\frac{1}{h^2}+\frac{1}{4}}\right).
\end{eqnarray}

To generate CGO solutions with Gaussian behavior in frequency, we
consider two choices of parameters $(a_j,b_j)$, $j=1,2$:\\[0.3cm]
\textbf{(1)} We choose $a_j\in\CC^3$ such that $a_j\cdot\zeta_j=0$ for
$j=1,2$, and $b_1=b_2=(0,0,1/h)$. Then $\check{E}_{0,j} =
\mathrm{i}\omega\tilde{E}_{0,j} + \mathcal{O}(h^2\omega^2)$, where
$\tilde{E}_{0,j}$ is independent of $\omega$, 
\begin{equation*}
	\tilde{E}_{0,j} = \frac{\mu_0\gamma_0\sqrt{\gamma}}{\mathrm{i}|\zeta|^2}(\tilde{\zeta}_j\times b) = \mathrm{i}\frac{\mu_0\gamma_0\sqrt{\gamma}}{|\zeta|^2h^2} \left((-1)^j, \frac{h}{2}, 0\right).
\end{equation*}
Assuming that $\omega$ is bounded, we have
\begin{equation}
\label{eq20}
\begin{array}{rcl}
	\displaystyle \check E_1(x,\omega) 
	&=& \displaystyle \mathrm{e}^{-\alpha h\omega^2}
	\mathrm{e}^{\mathrm{i}\tilde{\zeta}_1\cdot x}(\mathrm{i}
	\omega\tilde{E}_{0,1} + \mathcal{O}(h)),\\[6pt]
	\displaystyle \check E_2(x,\omega) 
	&=& \displaystyle \mathrm{e}^{ \alpha h\omega^2}
	\mathrm{e}^{\mathrm{i}\tilde{\zeta}_2\cdot x}(\mathrm{i}
	\omega\tilde{E}_{0,2} + \mathcal{O}(h)),
\end{array}
\end{equation}
where
\begin{equation*}
	\alpha  = \frac{\mu_0\gamma_0x_3}{2\sqrt{1+\frac{h^2}{4}}}.
\end{equation*}	

\medskip\medskip

\noindent
\textbf{(2)} We choose $a_j$ such that $a_j\cdot\zeta_j=1/h$ and
$b_1=b_2=(0,0,h^{\varepsilon-1})$ for $0<\varepsilon<1/2$. Then
$E_{0,j} = \tilde{E}_{0,j} + \mathcal{O}(i\omega h^\varepsilon)$, where
\begin{equation}
	\notag \tilde{E}_{0,j} = \frac{\sqrt{\gamma}}{|\zeta_j|^2h}\tilde{\zeta}_j.
\end{equation}
Then the CGO solutions are
\begin{equation}
\label{eq21}
\begin{array}{rcl}
	\displaystyle \check E_1(x,\omega) 	&=& \displaystyle \mathrm{e}^{-\alpha h\omega^2}\mathrm{e}^{\mathrm{i}\tilde{\zeta}_1\cdot x}(\tilde{E}_{0,1}(x) + \mathcal{O}(\mathrm{i}\omega h^\varepsilon)),\\[6pt]
	\displaystyle \check E_2(x,\omega) &=& \displaystyle \mathrm{e}^{ \alpha h\omega^2}\mathrm{e}^{\mathrm{i}\tilde{\zeta}_2\cdot x}(\tilde{E}_{0,2}(x) + \mathcal{O}(\mathrm{i}\omega h^\varepsilon)).
\end{array}
\end{equation}	

\noindent
\textbf{Remark}: Although the choice of $b_j$ such that $\zeta_j\times
b_j=0$ could simplify the above argument, it will destroy the proof in
Section \ref{se:unique}. Also, in the second choice of $(a_j,b_j)$, we
would prefer large $\varepsilon$ to get better approximations in
\eqref{eq21}. However, the proof in Section \ref{se:unique} requires
larger $1-2\varepsilon$ in \eqref{eq:phi3}, that is, smaller $\varepsilon$, to obtain a
better stability estimate. We need to choose $\varepsilon$ to balance
these two conditions.

\medskip\medskip

\noindent
Suppose $\omega$ is bounded. Upon taking an inverse Fourier
transform of \eqref{eq20} and \eqref{eq21}, we obtain 
\[
   \mathcal{F}^{-1} \check E_j(x,t) \approx f(t)\mathrm{e}^{\mathrm{i} \tilde{\zeta}_2\cdot x} 
   \tilde{E}_{0,j}(x), \quad\mathrm{for}\quad j=1,2,
\]
where $f(t)$ is a function in time and concentrates at zero time while $h$ is small. The above equation also characterizes the temporal behavior of electric source $G$ and the internal data $\Sigma$. In experiments, one can generate an electric source with particular waveform by convolving $G$ with a window function, $\hat{W}$ say, centered at a particular frequency corresponding with the certain waveform in time. 


\section{Acknowledgment}\label{se:acknowledge}

J. Chen was funded by Total S.A.


\begin{thebibliography}{99}
	\bibitem{Bal2010} G. Bal and G. Uhlmann, \emph{Inverse diffusion theory of photo-acoustics}, Inverse Problems, 26 (2010), 085010.
	
	\bibitem{Bal2014} G. Bal and T. Zhou, \emph{Hybrid inverse problems for a system of Maxwell's equations}, Inverse Problem, 30 (2014), 055013.
	
	\bibitem{Biot1956} M. A. Biot, “Theory of propagation of elastic waves in a fluid-saturated porous solid. I-Low-frequency range,” Journal of the Acoustical Society of America, vol. 28, pp. 168-178, 1956.
	
	\bibitem{Biot1956_2} M. A. Biot, “Theory of propagation of elastic waves in a fluid-saturated porous solid. II-High-frequency range,” Journal of the Acoustical Society of America, vol. 28, pp. 179-191, 1956.
	
	\bibitem{Salo2009} P. Caro, P. Ola and M. Salo, \emph{Inverse boundary value problem for Maxwell equations with local data}, Comm. in PDE, 34: 1425-1464, 2009.
		
	\bibitem{Chen2012} J. Chen and Y. Yang, \emph{Quantitative photo-acoustic tomography with partial data}, Inverse Problem, 28 (2012), 115014.
	
	\bibitem{Chen2013} J. Chen and Y. Yang, \emph{Inverse problem of electroseismic conversion}, Inverse Problem 29 (2013), 115006.
	
	\bibitem{Colton1992} D. Colton and L. \Paivarinta, \emph{The uniqueness of a solution to an inverse scattering problem for electromagnetic waves}, Arch. Rational Mech. Anal. 119 (1992), 59-70.

	\bibitem{Bal2013} C. Guo and G. Bal, \emph{Reconstruction of complex-valued tensors in the Maxwell system from knowledge of internal magnetic fields}, arXiv:1308.5872
	
	\bibitem{Haartsen1995} M. W. Haartsen, \emph{Coupled electromagnetic and acoustic wavefield modeling in poro-elastic
	media and its application in geophysical exploration}, Ph.D. Thesis, Massachusetts Institute of Technology, 1995.
	
	\bibitem{Kenig2011} C. E. Kenig, M. Salo and G. Uhlmann, \emph{Inverse problems for the anisotropic maxwell equations}, Duke Math. J. 157 (2011), no 2, 369-419.
	
	\bibitem{Ola1996} P. Ola and E. Somersalo, \emph{Electromagnetic inverse problems and generalized sommerfeld potentials}, SIAM J. Appl. Math. Vol. 56, No. 4, pp. 1129-1145, 1996
	
	\bibitem{Ola1993} P. Ola, L. P\"aiv\"arinta and E Somersalo, \emph{An inverse boundary value problem in electrodynamics}, Duke Mathematical Journal, Vol. 70, No. 3, 1993
	
	\bibitem{Plona1980} T. Plona, 1980, \emph{Observation of a second bulk compressional wave in a porous medium at ultrasonic frequencies}, Appl. Phys. Lett., 36, 259-261
	
	\bibitem{Pride1994} S. R. Pride, \emph{Governing equations for the coupled electro-magnetics and acoustics of porous media}, Phys. Rev. B, 50 (1994), 15678-15696.
	
	\bibitem{Pride1996} S. R. Pride and M. W. Haartsen, \emph{Electroseismic wave properties}, Journal of the Acoustical Society of America, vol.	100, no. 3 (1996), pp. 1301–1315.
	
	\bibitem{Schakel2011} M. D. Schakel, \emph{Coupled seismic and electromagnetic wave propagation}, Ph.D. thesis, de Technische Universiteit Delft, 2011
	
	\bibitem{Thom1993} A. Thompson and G. Gist, \emph{Geophysical applications of electro-kinetic conversion}, The Leading Edge, vol. 12 (1993), pp. 1169–1173
	
	\bibitem{Thom2007} A. Thompson, S. Hornbostel et. al., \emph{Field tests of electroseismic hydrocarbon detection}, Geophysics, vol. 72 NO. 1, pp. N1-N9
	
	\bibitem{Uhlmann1987} J. Sylvester and G. Uhlmann, \emph{A global uniqueness theorem for an inverse boundary value problem}, Ann. of Math., 125(1) (1987), pp. 153-169.1
	
		
	\bibitem{White2005} B. White, 2005, \emph{Asymptotic theory of electro-seismic prospecting}, SIAM J. Appl. Math. Vol. 65, No. 4, pp. 1443-1462
	
	\bibitem{Williams2001} K. L. Williams, \emph{An effective density fluid model for acoustic propagation	in sediments derived from Biot theory}, J. Acoust. Soc. Am. Volume 110, Issue 5, pp. 2276-2281 (2001)
	
	\bibitem{Zhu1999} Z. Zhu, M. W. Haartsen, and M. N. Toks\"oz, “Experimental	studies of electro-kinetic conversions in fluid-saturated bore-hole models,” Geophysics, vol. 64, no. 5, pp. 1349-1356, 1999.
	
	\bibitem{Zhu2003} Z. Zhu and M. N. Toks\"oz, “Cross hole seismoelectric measurements in bore-hole models with fractures,” Geophysics, vol. 68, no. 5, pp. 1519-1524, 2003.
	
	\bibitem{Zhu2005} Z. Zhu and M. N. Toks\"oz, “Seismoelectric and seismomagnetic measurements in fractured bore-hole models,” Geophysics, vol. 70, no. 4, pp. F45-F51, 2005.
	
\end{thebibliography}
\end{document}